\documentclass[11pt]{article}
\usepackage{amsmath}
\usepackage[affil-it]{authblk}

\usepackage{lineno,hyperref}
\modulolinenumbers[5]
\usepackage{bm}
\usepackage{xcolor} 


\usepackage{mathtools,amsfonts,amssymb,amscd,xspace,bbm,mathabx,enumitem,empheq,xparse,amsthm}

\theoremstyle{plain}
\newtheorem{theorem}{Theorem}[section]

\newtheorem{lemma}[theorem]{Lemma}
\newtheorem{proposition}[theorem]{Proposition}
\theoremstyle{definition}
\newtheorem{definition}[theorem]{Definition}
\theoremstyle{remark}
\newtheorem{remark}[theorem]{Remark}
\theoremstyle{claim}
\newtheorem{claim}[theorem]{Claim}

\usepackage[margin=2.5cm]{geometry}
\usepackage{setspace}
\onehalfspacing

\DeclarePairedDelimiter{\norm}{\lVert}{\rVert}
\newcommand{\half}{\frac{1}{2}}
\newcommand{\Rd}{\mathbb{R}^d}
\newcommand{\RdxRd}{\Rd \times \Rd}
\newcommand{\RdxRdxRd}{\Rd \times \Rd \times \Rd}

\newcommand{\oS}{\overline{S}}
\newcommand{\oPFP}{\overline{PFP}}
\newcommand{\ooPFP}{\overline{\overline{PFP}}}

\newcommand{\ip}[2]{\langle #1\ , #2 \,\rangle}

\newcommand{\supp}{\textrm{supp}}

\newcommand{\R}{\mathbb{R}}
\newcommand{\N}{\mathbb{N}}






\newcommand{\extendedsub}[2]{{\partial}#1(#2)}
\newcommand{\strongsub}[2]{\boldsymbol{\partial}#1(#2)}
\newcommand{\minstrongsub}[2]{\boldsymbol{\partial}^0#1(#2)}

\newcommand{\tlambda}{\tilde{\lambda}}

\DeclareMathOperator*{\argmin}{arg\,min}
\allowdisplaybreaks


\begin{document}

\title{Gradient Flows for Probabilistic Frame Potentials in the Wasserstein Space }
\author{Clare Wickman}
\affil{Johns Hopkins University Applied Physics Laboratory}
%
\author{Kasso A. Okoudjou}
\affil{Department of Mathematics, Tufts University}
\maketitle

{\bf Keywords:} frame potential, probabilistic frames, optimal transport, Wasserstein space, gradient flows

\begin{abstract}
In this paper we bring together some of the key ideas and methods of two disparate fields of mathematical research, frame theory and optimal transport, using the methods of the second to answer questions posed in the first. In particular, we construct gradient flows in the Wasserstein space $P_2(\Rd)$ for a new potential, the tightness potential, which is a modification of the probabilistic frame potential. It is shown that the potential is suited for the application of a gradient descent scheme from optimal transport that can be used as the basis of an algorithm to evolve an existing frame toward a tight probabilistic frame.
\end{abstract}


\section{Introduction}\label{sec:intro}

We consider the family of probabilistic $p$-frame potentials defined on a subset of the Borel probability measures $P(\Rd)$ by $$PFP_p(\mu) = \iint_{\RdxRd} \left|\ip{x}{y}\right|^pd\mu(x)d\mu(y), \quad\forall\mu\in P(\Rd),\quad p>0$$ and define a new, related ``tightness" potential 
$$TP(\mu) = \frac{PFP_2(\mu)}{M_2(\mu)}-\frac{M_2(\mu)}{d},$$ defined to be zero when $\mu$ is a delta mass at the origin.

The study of global minimizers of energies of this form  is of particular interest to  many areas of mathematics including frame theory, discrepancy theory, and  design theory \cite{BilykPark, BilykDaiMatzke, KassoMartinOverview, MartinKassoPframe,  Venkov}. In particular, it was proved in \cite{MartinKassoPframe} that,  when restricted to probability measures on the unit sphere, the minimizers of $PFP_p$ are all discrete measures when $0<p<2$, and it is conjectured that this is in fact the case for all $0<p\neq 2k$ \cite{BilykPark}.  One of the outcomes of this paper is an algorithm that will approximate the minimizers of $PFP_p$  when $p=2k$ is an even integer.
More generally,   in the finite discrete  setting with uniform distributions over sets of $N$ atoms, $N\geq d$, minimizers of these functionals form classes of special  configurations including finite unit norm tight frames ($p=2$), equiangular tight frames, spherical $p$-designs ($p$ even), and Grassmannian frames ($p=\infty$).  Especially when Parseval (with frame constant $A=B=1$ below), such constructions are relevant in coding theory \cite{GoyalKovacevic,LiuDaiLuo}, quantum measurement \cite{appl2005,ZippMatlin,RenBluScoCav2004,Oviedo}, and statistical shape analysis \cite{EhlerGalanis}, including sparse principal component analysis \cite{LuZhangPCA}, the orthogonal procrustes problem \cite{EldenParkProcrustes,OviedoProcrustes}, pattern recognition \cite{KokiopoulouChenSaad}, 1-bit compressive sensing \cite{BoufounosBaraniuk1bitCS}, and color image restoration and conformal mapping construction \cite{OsherLai}. For many of these problems the link is optimization over the Stiefel manifold of Parseval frames.

On the other hand, the potential $PFP_p$ can be viewed outside of the frame context as a special case of functionals of the form $\iint_{\R^d\times \R^d}f(\ip{x}{y}) d\mu(x)d\mu(y)$ where $f$ is a real-valued function describing the nature of the potential, e.g., attractive vs repulsive. Unlike the interaction potential, a class of functionals of the form $\iint_{\R^d\times \R^d}W(x-y) d\mu(x)d\mu(y)$ where the interaction depends on the distance between relative positions of points in the support, the dynamics dictated by the probabilistic frame potentials depend upon the degree of coherence among points. With support restricted to the sphere, the $2k$-probabilistic frame potentials can be written as interaction potentials (albeit with $W(0)=1$) and should be $\lambda$-locally convex and support the construction of gradient flows on the sphere, as in \cite{carrilloslepcevwu}. A number of interesting papers have been written on the properties of interaction potentials, including the geometry of their minimizers \cite{CarrilloFigalli,balague2013dimensionality}.


On $\Rd,$ these results do not apply; the frame potential no longer takes the usual form of an interaction potential. Consequently, in Section~\ref{sec: the tightness potential} we introduce a new functional, the \emph{tightness potential}, whose minimizers are characterized as the tight frames (Proposition~\ref{tp_pot_bound_tp_op}). In particular, our main result, Theorem \ref{existence_flows_tp}, uses a formalism described in~\cite{AGS2005} to establish the well-posedness of the minimization of the tightness potential via gradient flows on probability spaces through the application of a variational scheme which gives approximations to the flow.  In  the process we prove some regularity properties of the tightness potential, which can be generalized for higher values of $p$. After some preliminaries on frame theory and optimal transport, we briefly explain this approach.

A second consequence of enlarging the set of measures of interest from $P(S^{d-1})$ to $P(R^d)$ is that certain compactness results no longer apply. Therefore, we restrict the domain of our functional 
to measures with uniformly bounded moments of some order greater than 2, see Section \ref{sec: the tightness potential}.  This restriction has limited effects numerically since there is already a natural limitation on frames that are machine representable, and a similar restriction has been considered in other works such as \cite{Craig2017}.

Finally, we emphasize that the approach we lay out to the tightness potential minimization, i.e., via approximations of the flow, is implementable algorithmically. While the results of that algorithm are, as expected, a path to a numerical approximation to the closest tight frame, modifications, such as the addition of a penalty term, can be applied to the potential which lead to other close tight frames. A similar algorithmic approach, not based on gradient flows on probability spaces, was described in \cite{AutoTuning}, though restricted to unit-norm frames.

 
\subsection{Frames and probabilistic frames}\label{sec:subsec1.1}

Frames are redundant spanning sets of vectors or functions that can be used to represent signals in a faithful but nonunique way and that provide an intuitive framework for describing and solving problems in coding theory and sparse representation. We refer to \cite{Christensen2003, CasKut2013, OkoudjouFiniteFrame} for more details on finite frames and their applications.

A set 
 $\Phi=\{\varphi_i\}_{i=1}^N\subset\Rd$ is a frame if and only if there exist frame bounds $0<A\leq B <\infty$ such that $$ \forall x\in\Rd,\quad A\norm{x}^2\leq\sum\limits_{i=1}^N\ip{x}{\varphi_i}^2\leq B\norm{x}^2.$$ When $A=B$ we say that $\Phi$ is tight for $\Rd$. If, in addition, $\|\varphi_i\|=1$ for all $i=1, \hdots, N$, we say that the frame is a finite unit-norm tight frame (FUNTF), and each $x\in \R^d$ can be written as $$x=\tfrac{d}{N}\sum_{i=1}^N \ip{x}{\varphi_i}\varphi_i.$$ FUNTFs were characterized by Benedetto and Fickus as the minimizers of the discrete analogue of $PFP_2$ \cite{BenedettoFickus}, and constitute a class of frames sought in certain applications such as coding theory. 


In \cite{MartinKassoPframe,OktayThesis}, it is shown that the minimizers of another functional called the $p$-frame potential are precisely the equiangular FUNTFs, those for which the mutual coherence between distinct frame elements is constant. While algebraic approaches exist for the construction of FUNTFs, such as those outlined in \cite{NateMattJameson, AutoTuning, SpectralTetris}, the existence of the potentials mentioned above suggests that variational methods for construction of tight frames and FUNTFs might complement these methods. 
Indeed, the idea of using differential calculus to find useful frames, which we employ, is not new. Differential calculus approaches \cite{AutoTuning, BodmannCasazza2010} and flows over orthonormal bases \cite{BenedettoKebo} have been applied to various related problems, such as quantum detection.  However, the setting of probabilistic frames in the Wasserstein space will allow the construction of much more general gradient flows for frame potentials because of the sophisticated machinery which has been developed for this space.

A probabilistic frame $\mu$ for $\Rd$ is a probability measure on $\Rd$ for which there exist constants $0<A\leq B<\infty$ such that for all $x\in\Rd$, $$A\norm{x}^2\leq\int_{\Rd}\ip{x}{y}^2d\mu(y)\leq B\norm{x}^2.$$ When we can choose $A=B$ we say that $\mu$ is a tight probabilistic frame. Developed in a series of papers \cite{KassoMartinOverview, MartinKassoPframe, MartinRTF, OkoudjouWickman}, probabilistic frames generalize finite frames.  It follows that a  frame   $\Phi=\{\varphi_i\}_{i=1}^N$ can be identified with the probabilistic frame $\mu_{\Phi}(x)=\tfrac{1}{N}\sum_{i=1}^N  \delta_{\varphi_i}(x).$

\subsection{Probabilistic frames and the Wasserstein space}\label{sec:subsec1.2}
 
A natural realm in which to explore probabilistic frames and flows thereof is that of optimal transport theory. For probabilistic frames, we consider the space of probability measures with finite second moments, or, more generally, with finite $p$-th moments, $P_p(\Rd)$:
$$M_p(\mu):=\int_{\Rd}\norm{x}^pd\mu (x) < \infty.$$ 
Define the support of a probability measure $\mu$ on $\Rd$ as the set:
$$\supp(\mu):=\left\{x\in \Rd \textrm{ s.t. for all open sets } U_x \textrm{ containing } x, \mu(U_x)>0\right\}.$$ 
By \cite[Theorem 5]{KassoMartinOverview}, $\mu$ is a probabilistic frame if and only if it has finite second moment and the linear span of its support is $\Rd$.  This characterization can be restated in terms of the probabilistic frame operator for $\mu$, $S_{\mu},$ which for all $y\in\Rd$ satisfies:
$$S_{\mu}y=\int_{\Rd}\ip{x}{y}\,x\,d\mu(x).$$
Equating $S_{\mu}$ with its matrix representation $\int_{\Rd}xx^{\top}d\mu(x)$, the requirement that the support of $\mu$ should span $\Rd$ is equivalent to this matrix's being positive definite. 

This way of viewing probabilistic frames leads us naturally to the Wasserstein space of probability measures with finite second moment, $P_2(\Rd),$ a metric space with distance defined by an optimal transport problem.  The ($p$-)Wasserstein distance, $W_p$ between two probability measures $\mu$ and $\nu$ on $\Rd$, defined for $p\geq 1$, is: $$W_p^p(\mu,\nu):=\inf\left\{\iint_{\RdxRd}\norm{x-y}^pd\gamma(x,y): \gamma \in \Gamma(\mu,\nu)\right\},$$ 
where $\Gamma(\mu,\nu)$ is the set of all joint probability measures $\gamma$ on $\RdxRd$ such that for all $B_1,B_2 \in \mathcal{B}(\Rd)$, $\gamma(B_1\times\Rd)=\mu(B_1)$ and $\gamma(\Rd\times B_2)=\nu(B_2)$. One may also write this in terms of projection mappings as $\pi_{\hash}^1 \gamma =\mu$ and $\pi_{\hash}^2 \gamma=\nu.$ The notation $\Gamma_0(\mu,\nu)$ is used to indicate the set of all joint probability measures achieving the infimum above. 

The Monge-Kantorovich optimal transport problem is the search for the set of joint measures which induce the infimum; any such joint distribution is called an optimal transport plan.  In the quadratic case, when $\mu$ does not give mass to sets of dimension at most $d-1$, then $$W_2^2(\mu,\nu):=\inf_{T}\left\{\int_{\Rd}\norm{x-T(x)}^2d\mu(x): T_{\#}\mu=\nu\right\},$$ where $T_{\#}\mu=\nu$ indicates that $T$ is a deterministic transport map (or deterministic coupling): i.e., for all $\nu$-integrable functions $\phi$, $$\int_{\Rd}\phi(y)d\nu(y)=\int_{\Rd}\phi(T(x))d\mu(x).$$

Equipped with the $p$-Wasserstein distance, $(P_p(\Rd),W_p)$ is a complete, separable metric (``Polish'') space. In fact the set of measures with discrete, finite support is dense in $P_p(\Rd)$ (c.f.,\cite{OkoudjouCheng}). Convergence in $P_p(\Rd)$ is the usual narrow convergence of probability measures, combined with convergence of the $p$th moments.  Alternatively, enlarging the set of allowable test functions, a sequence of measures $\{\mu_n\} \subset P_p(\Rd)$ is said to converge weakly in $(P_p(\Rd),W_p)$ to $\mu\in P_p(\Rd)$ if for all continuous functions $\phi$ with $$|\phi(x)|\leq C(1+\norm{x-x_0}^p),$$ for some $C>0$ and some $x_0\in \Rd$, $$\int_{\Rd}\phi(x)d\mu_n(x)\rightarrow\int_{\Rd}\phi(x)d\mu(x).$$

Much effort has gone into developing a rich theory of gradient flows on the $p$-Wasserstein spaces, with weak solutions to flows based on the theory of $p$-absolutely continuous curves (e.g., \cite{AGS2005, GKP}). Many PDEs can be reformulated as energy minimization problems in these spaces (e.g., \cite{CDFLS, JKO}). For our purposes, the technical basis for weak solutions provided by \cite{AGS2005} will be enough, although we will refer to intuitions and certain reformulations provided by \cite{GKP}. 

\subsection{Gradient Flows and the Minimizing Movement Scheme}\label{sec:subsec1.3}

Of the four approaches given in \cite{AGS2005} for establishing the well-posedness of a minimization problem in the Wasserstein spaces, we lead the reader through the one best suited to our potential, first defining some of the necessary gradient flow machinery, the subject of \cite{AGS2005,Villani2009}. For specialists in optimal transport, many of the definitions related to subdifferentials in the Wasserstein space will likely prove unnecessary, but we include it to make the paper self-contained and to make it accessible for those coming from the frame community. 

If $F:P_2(\Rd)\rightarrow(-\infty,\infty]$ is a functional on the 2-Wasserstein space then the domain of $F$ is \\$D(F)=\{\mu\in P_2(\Rd): F(\mu)<\infty\}$, and $F$ is proper if $D(F)$ is nonempty. To define gradient descent for functionals on such a space, we must have a notion of absolute continuity and metric derivative for curves in the $2$-Wasserstein space.

\begin{definition}\emph{\cite[Definition 1.1.1, Theorem 1.1.2, Definition 1.2.4]{AGS2005}}\label{metric_slope}
\\
A curve $\sigma_t:(a,b)\rightarrow P_2(\Rd)$ is $2$-absolutely continuous if there exists $\beta\in L^2((a,b))$ such that $$W_2(\sigma_t,\sigma_s)\leq\int_s^t\beta(\tau)d\tau\quad\textrm{for all }a<s<t<b.$$  The space of such curves is denoted $AC^2((a,b);P_2(\Rd)),$ and for such $\sigma\in AC^2(a,b;P_2(\Rd))$, the metric derivative $|\sigma'|(t):=\lim_{s\rightarrow t}\frac{W_2(\sigma_t,\sigma_s)}{|t-s|}$ exists for $\mathcal{L}^1$-a.e. $t\in (a,b)$. The metric slope $|\partial\phi|(\mu)$ of a functional $\phi:P_2(\Rd)\rightarrow (-\infty,\infty]$ at $\mu$ is given by 
\begin{equation}|\partial\phi|(\mu)=\limsup_{W_2(\mu,\nu)\rightarrow 0}\frac{(\phi(\mu)-\phi(\nu))^+}{W_2(\mu,\nu)},\label{metric_slope_eqn}
\end{equation} where $u^+=\max(0,u).$
\end{definition}

\begin{remark}
In what follows, the inclusion symbol $i$ is used for deterministic couplings in the sense that for a function $G:\RdxRd\rightarrow\R,$ the joint plan given by $\gamma = (i, f)_{\hash}\mu$ for some $f:\Rd\rightarrow\Rd$ behaves thus: $$\iint_{\RdxRd}G(x,y)d\gamma(x,y)=\int_{\Rd}G(x,f(x))d\mu(x).$$
\end{remark}

\begin{definition}\label{strong_p_subdifferential}\emph{\cite[The extended (strong) subdifferential, Definition 10.3.1]{AGS2005}}
Let $\phi:P_2(\Rd)\rightarrow(-\infty,\infty]$ be a proper and lower semi-continuous functional, and let $\mu^1\in D(\phi)$. Then $\gamma\in P_{2}(\RdxRd)$ belongs to the extended Fr\'{e}chet subdifferential $\extendedsub{\phi}{\mu^1}$ if $\pi^1_{\hash}\gamma=\mu^1$ and for any $\mu^3\in P_2(\Rd)$
$$\phi(\mu^3)-\phi(\mu^1)\geq\inf_{\nu\in\Gamma_0(\gamma,\mu^3)}\iiint\ip{x_2}{x_3-x_1}d\nu + o(W_2(\mu^1,\mu^3)).$$

\noindent We further say that $\gamma$ belongs to the extended \textbf{strong} Fr\'{e}chet subdifferential $\strongsub{\phi}{\mu^1}$ if for every $\nu\in\Gamma(\gamma,\mu^3)$, it satisfies the stronger condition
\begin{equation}\label{strong_frechet}
\phi(\mu^3)-\phi(\mu^1)\geq \iiint\ip{x_2}{x_3-x_1}d\nu + o(C_{2,\nu}(\mu^1,\mu^3)),
\end{equation}
where $C_{2,\nu}(\mu^1,\mu^3)$ is the pseudo-distance given by the cost 
\begin{equation}\label{pseudodistance}
C^2_{2,\nu}(\mu^1,\mu^3)=\iiint\norm{x_1-x_3}^2d\nu(x_1,x_2,x_3).
\end{equation}
\end{definition}

The subdifferential $\extendedsub{\phi}{\mu^1}$ is a multivalued operator which extends curves of maximal slope to a nonsmooth setting. The version defined here is termed extended because it applies to general (not merely regular) probability measures, where a reference measure (e.g., $\mu_1$ above) cannot be pushed to a testing measure (e.g., $\mu_3$ above) by a deterministic transport plan (i.e., $\gamma$ above may not be given by some $(i, t^{\mu^3}_{\mu_1})_{\hash}\mu_1$), where $t^{\mu^3}_{\mu_1}$ denotes a map pushing $\mu_1$ to $\mu_3$. For this reason, the intuition of a Euclidean distance vector (suggested by the term $x_3-x_1$) cannot be analogized here via a displacement map using $t^{\mu^3}_{\mu_1}-i$. Finally, we have the ability to demand a strong subdifferential, meaning that we are free to perturb a minimizer by an arbitrary transport map, not just the optimal one. We also note that, properly speaking, the term ``subdifferential'' defined below refers to a set; however, this term is also commonly used to refer to elements of that set. Following the literature, we will use the term interchangeably.

Next we define regularity, a property which provides closure of the subdifferential, gives existence of a unique minimal selection in the subdifferential (defined in Lemma \ref{minimal_selection}), and implies the lower semicontinuity of the metric slope.

Below, 
\begin{align}\label{partial_moment}
|\gamma|_{j,2}^2=\iint_{\RdxRd}\norm{x_j}^2d\gamma(x_1,x_2),\quad j=1,2.
\end{align}
\begin{definition}\emph{\cite[Regular functionals, Definition 10.3.9]{AGS2005}, \cite[Definition 6]{Bonnet}}\label{regular_functional}
A proper, lower semicontinuous functional $\phi:P_2(\Rd)\rightarrow(-\infty,\infty]$ is regular if whenever the measures $\mu_n$ and the elements of their strong subdifferentials $\gamma_n\in\strongsub{\phi}{\mu_n}$ satisfy
$$\phi(\mu_n)\rightarrow\varphi\in\R,\quad\mu_n\rightarrow\mu\quad\textrm{in }P_2(\Rd),$$ $$\sup_{n}|\gamma_n|_{2,2}<\infty,\quad\gamma_n\rightarrow\gamma\quad\textrm{in }P(\Rd\times\Rd)$$
then $\gamma\in\extendedsub{\phi}{\mu}$, and $\varphi=\phi(\mu)$.
\end{definition}


Since the subdifferential $\partial \phi(\mu)$ of a functional $\phi$ at $\mu$ in $P_2(\Rd)$ may be multivalued, we define a gradient flow in terms of a differential inclusion:                      

\begin{definition}\emph{\cite[Definition 11.1.1]{AGS2005}}
Given a map $\mu_t\in AC_{loc}^2((0,\infty); P_2(\Rd))$ with  $v_t\in Tan_{\mu_t}P_2(\Rd)$ the velocity vector field of $\mu_t$, $\mu_t$ is a solution of  the gradient flow equation
\begin{equation}\label{grad_flow}
v_t\in -\partial \phi(\mu_t),\quad t>0
\end{equation}
if $(i, -v_t)_{\hash}\mu_t\in\strongsub{\phi}{\mu}$ for a.e. $t>0.$ 
\end{definition}

Finally, we have the following result about the minimal selection in the strong subdifferential related to the metric slope of Definition \ref{metric_slope}. This element is important because it often satisfies both variational inequalities along optimal tranport plans and directional derivative inequalities along more general plans.

\begin{lemma}\emph{\cite[Minimal selection, Theorem 10.3.11]{AGS2005}}\label{minimal_selection}
Let $\phi:P_2(\Rd)\rightarrow(-\infty,\infty]$ be a regular, lower semi-continuous functional, bounded below, and let $\mu\in D(\bm{\partial}\phi),$ i.e., $\bm{\partial}\phi(\mu)\neq\emptyset.$ Then there exists a unique plan $\gamma_0\in\extendedsub{\phi}{\mu}$ which attains the minimum $$|\gamma_0|_{2,2}=min\{|\gamma|_{2,2}:\gamma\in\strongsub{\phi}{\mu}\}=|\partial\phi|(\mu).$$ $\gamma_0$ will be denoted by the symbol $\bm{\partial}^0\phi(\mu).$
\end{lemma}

One approach to solving the gradient flow equation in the Wasserstein space is to draw an analogy with the usual setting of gradient flows on a Riemannian manifold and perform a time discretization of the steepest descent equation. This scheme was pioneered by \cite{JKO}, and its convergence is equivalent to the above formulation of the gradient flow, as laid out in \cite[Chapter 11]{AGS2005}. To describe this scheme, we will follow \cite{Kamalinejad} and \cite[Chapter 11.1.3]{AGS2005}.
\begin{definition}\emph{The Minimizing Movement Scheme}\label{min_mov}
Assume the following:
\begin{itemize}[label={}] 
\item Let $\phi:P_2(\Rd)\rightarrow (-\infty,\infty]$ be a proper, lower semicontinuous functional such that  for some $\tau^* > 0$ $$\nu\mapsto \Phi(\tau,\mu;\nu):=\frac{1}{2\tau}W_2^2(\mu,\nu)+\phi(\nu)$$ admits a minimum point for all $\tau\in(0,\tau^*)$ and $\mu\in P_2(\Rd).$  \label{min_moment_cond}
\end{itemize}
Fix a measure $\mu_0\in P_2(\Rd).$  Given any step size $\tau>0$, we can partition $(0,\infty]$ into $\bigcup_{n=1}^{\infty}((n-1)\tau,n\tau].$  For a given family of initial values $M_{\tau}^0$ such that $$M_{\tau}^0\rightarrow \mu_0\quad\textrm{in } P_2(\Rd),\quad\phi(M_{\tau}^0)\rightarrow\phi(\mu_0)\quad \textrm{as }\tau\downarrow 0$$ we can define for each $\tau\in (0,\tau^*)$ a family of sequences $\{M_{\tau}^n\}_{n=1}^{\infty}$ satisfying $$M_{\tau}^n=\argmin_{\nu\in D(\phi)}\Phi(\tau, M_{\tau}^{n-1};\nu),$$ where the choice of $M_{\tau}^n$ may not be unique, but such a measure will always exist. Then the piecewise constant interpolant path in $P_2(\Rd),$ $$\overline{M}_{\tau}(t):=M_{\tau}^n,\quad t\in ((n-1)\tau,n\tau],$$ is termed the discrete solution. 
A curve $\mu$ will  be a Generalized Minimizing Movement for $\Phi$ and $\mu_0$ if there exists a sequence $\tau_k\downarrow 0$ such that $$\overline{M}_{\tau_k}(t)\rightarrow\mu_t\quad\textrm{narrowly in }P(\Rd)\textrm{ for every }t>0,\quad\textrm{ as }k\rightarrow\infty.$$
For $\mu_0\in D(\phi)$, by a compactness argument, this solution always exists and is an absolutely continuous curve $\mu\in AC_{loc}^2([0,\infty);P_2(\Rd)).$
\end{definition}
As observed by \cite{AGS2005} to illustrate the goal of this method, if we can restrict the domain of the functional $\phi$ and its gradient to the regular measures, then we can define a sequence of optimal transport maps $T_{\tau}^n$ pushing $M_{\tau}^n$ to $M_{\tau}^{n-1}.$  Then the discrete velocity vector can be defined as $$V_{\tau}^n:=\frac{T_{\tau}^n-i}{\tau}\in\partial\phi(M_{\tau}^n),$$
which is an implicit Euler discretization of \eqref{grad_flow}.  The piecewise constant interpolant $$\overline{V}_{\tau}(t):=V_{\tau}^n\quad\textrm{for }t\in((n-1)\tau,n\tau],$$ converges distributionally in $\Rd\times(0,\infty)$ up to subsequences to a vector field which solves the continuity equation.  The problem which remains is proving that this vector field is also a solution of \eqref{grad_flow}.

For regular functionals, without having to restrict ourselves to the convex case or to regular measures, it can be shown that this convergence occurs; the following theorem gives sufficient conditions for this convergence. This is particularly important, given that we want to apply the result to discrete measures such as embeddings of finite frames. Indeed, several recent papers have noted that minimizers of related potentials are discrete \cite{CarrilloFigalli} or have empty interior \cite{BilykPark}.  We therefore rely on this for our main result.

\begin{lemma}\emph{\cite[Theorem 11.3.2.]{AGS2005}}\label{existence_flows_ags}
Let $\phi:P_2(\Rd)\rightarrow(-\infty,\infty]$ be a proper and lower semicontinuous  regular functional (with respect to the narrow convergence) with sublevel sets that are compact in $P_2(\Rd)$.  Then for every initial datum $\mu_0\in D(\phi)$, each sequence of discrete solutions $\overline{M}_{\tau_k}$ of the variational scheme admits a (not relabeled) subsequence such that
\begin{enumerate}
\item $\overline{M}_{\tau_k}(t)$ narrowly converges in $P(\Rd)$ to $\mu_t$ locally uniformly in $[0,\infty)$, with \\$\mu_t\in AC_{loc}^2([0,\infty);P_2(\Rd))$.
\item $\mu_t$ is a solution of the gradient flow equation $$v_t=-\partial^0\phi(\mu_t),\quad\norm{v_t}_{L^2(\mu_t;\Rd)}=|\mu^{'}|(t), \textrm{ for a.e. } t>0$$
with $\mu_t\rightarrow\mu_0$ as $t\downarrow 0,$ where $v_t$ is the tangent vector to the curve $\mu_t.$
\item The energy inequality $$\int_a^b\int_{\Rd}|v_t(x)|^2d\mu_t(x)dt+\phi(\mu_b)\leq \phi(\mu_a)$$ holds for every $b\in [0,\infty)$ an $a\in [0,b)\setminus\mathcal{N}$, where $\mathcal{N}$ is a $\mathcal{L}^1$-negligible subset of $(0,\infty).$
\end{enumerate}
\end{lemma}

To interpret the notation used in item 2, we note that any $\gamma_0\in\minstrongsub{\phi}{\mu}$ will be concentrated on the graph of the transport map $(i,-v_t)_{\hash}\mu$ \cite[Theorem 11.1.3]{AGS2005}. The compactness of the sublevel sets provides for the compactness of discrete trajectories and therefore the existence of limit trajectories in the minimizing movement scheme as the step size $\tau\downarrow 0.$


\subsection{Outline of the paper}\label{sec:subsec1.4}

This paper establishes the well-posedness of the problem of minimizing an energy that we term the tightness potential (related to the frame potential) via gradient flows over the space $P_2(\Rd)$. This quantity is introduced in sections \ref{sec: the frame potential} through \ref{sec: the tightness potential}. Section \ref{sec: construction} establishes the characteristics of this potential needed to show the well-posedness of this problem, according to the criteria laid out in \cite[Section 11.3.1]{AGS2005} and summarized in brief above. The main result can be found in section \ref{sec: well-posedness}, Theorem \ref{existence_flows_tp}, namely that gradient flows exist for a potential measuring the tightness of a frame and that for this potential an approximating subsequence can be constructed via a variational scheme, and which satisfies an energy inequality. Notably, this variational scheme is sufficient to build an algorithm to construct paths to numerical approximations to tight frames.
Finally, section \ref{sec: the fourth and higher potentials} discusses extensions of this approach to algorithm-building to higher-order frame potentials that are of interest to the frame community.


%

\section{Gradient descent for Probabilistic Frame Potentials}\label{sec: frame_forces}

\subsection{The frame potential}\label{sec: the frame potential}
It is natural to begin the discussion of frame forces with the frame potential for finite frames and the analogous quantity for probabilistic frames.
The probabilistic frame potential for a probabilistic frame $\mu$ is given by
\begin{equation}\label{prob_frame_pot}
PFP(\mu)=\iint_{\RdxRd}\ip{x}{y}^2d\mu(x)d\mu(y)
\end{equation}
%

And we define the scaled probabilistic frame potential as
\begin{align}\label{scaled_prob_frame_pot}
\overline{PFP}(\mu) = \left\{ \begin{array}{cc} 
                \frac{PFP(\mu)}{M_2(\mu)} & \hspace{5mm} \mu\neq\delta_{\{0\}} \\
                0 & \hspace{5mm} \text{else} \\
                \end{array} \right.
\end{align}
\noindent where $\delta_{\{0\}}$ denotes the delta mass at the origin. 

In what follows, we explore several functionals on the space $P_2(\Rd)$ related to questions in frame theory, starting with the probabilistic frame potential.  For equal-norm frames restricted to spheres, this potential would be sufficient to identify tightness. In order to generalize, we tweak the scaled version to a quantity we term the tightness potential. 


\subsection{Locating tight probabilistic frames}\label{sec: locating tight probabilistic frames}

Some further analysis is needed before using the frame potential to find probabilistic tight frames.  In the following propositions and lemmas, we narrow our search space, establish a lower bound on how close the nearest probabilistic tight frame can be. We also show that, as in the finite case, the frame potential is indeed a crucial quantity in constructing gradient flows that will lead us to tight probabilistic frames. In fact, for a given probabilistic frame $\mu,$ we have control on the spectrum of the frame operators of the measures nearby in $P_2(\Rd),$ as seen in the next result.

Throughout the paper, we make use of a pseudo-distance generalizing (and bounding) the Wasserstein distance between two measures $\mu$ and $\nu\in P_2(\Rd).$ Given some $\gamma\in P_2(\Rd\times\Rd\dots\times\Rd)$ s.t. $\pi^i_{\hash}\gamma=\mu$ and $\pi^j_{\hash}\gamma=\nu,$ we define $C^2_{2,\gamma}(\mu,\nu):=\int\dots\int\norm{x_i-x_j}^2d\gamma(x_1,x_2,\dots).$
\begin{proposition}\label{frame_op_dist}
Suppose $\{\nu_n\}$ is a sequence converging to some $\mu\in P_2(\Rd).$ 
\begin{enumerate}[label=(\alph*)]
\item Then there exists some positive constant $C_{\mu}$ such that $\norm{S_{\nu_n}-S_{\mu}} \leq C_{\mu}W_2(\mu,\nu_n).$  In particular, convergence of a sequence of measures in the Wasserstein space implies the convergence of their frame operators. That is, the map $\mu\mapsto S_{\mu}$ is continuous.
\item Moreover, if $\mu$ is a probabilistic frame, there exists $N$ sufficiently large such that $\forall n \geq N,$ $\nu_n$ is also a probabilistic frame.
\item Similarly, for $\mu\neq\delta_{\{0\}}\in P_2(\Rd)$, given the scaled frame operator $\overline{S}_{\mu}:=S_{\mu}/M_2(\mu),$ there exists some positive constant $C'_{\mu}$ such that $\norm{\overline{S}_{\nu_n}-\overline{S}_{\mu}} \leq C'_{\mu} W_2(\mu,\nu_n).$  In particular, convergence of a sequence of measures to a probabilistic frame in the Wasserstein space implies the convergence of their scaled frame operators.
\end{enumerate}
\end{proposition}
\begin{proof}
Let $\{\nu_n\}$ and $\mu$ be as above, and let $S_{\nu_n},$ $S_{\mu}$ denote the matrix representations of their respective frame operators which exist since the measures in question are in $P_2(\Rd).$ Since $\nu_n\longrightarrow\mu$ in $P_2(\Rd),$ for $n$ sufficiently large, $M_2(\nu_n)\leq 2 M_2(\mu).$ 
Then, for any $\gamma_n\in\Gamma(\nu_n,\mu),$
\begin{align*}
\norm{S_{\nu_n}-S_{\mu}} &= \max_{v\in S^{d-1}} v^{\top}(S_{\nu_n}-S_{\mu})v=  \max_{v\in S^{d-1}} \iint_{\RdxRd} (\ip{x}{v}^2-\ip{y}{v}^2)d\gamma_n(x,y)\\						 
						&\leq \left(\iint_{\RdxRd} \norm{x-y}^2 d\gamma_n(x,y)\right)^{\half} \cdot \left(\iint_{\RdxRd} \norm{x+y}^2 d\gamma_n(x,y)\right)^{\half}\\		
						&\leq  C_{2,\gamma_n}(\mu,\nu_n)  \cdot \left(\iint_{\RdxRd} (\norm{x}+\norm{y})^2 d\gamma_n(x,y)\right)^{\half}\\								&\leq  C_{2,\gamma_n}(\mu,\nu_n)  \cdot \left(\left(\iint_{\RdxRd} \norm{x}^2 d\gamma_n(x,y)\right)^\half+\left(\iint_{\RdxRd}\norm{y}^2 d\gamma_n(x,y)\right)^{\half}\right)\\		
						 &\leq  C_{2,\gamma_n}(\mu,\nu_n) \cdot (1+\sqrt{2})\sqrt{M_2(\mu)}
\end{align*}
where the second-to-last inequality is by Minkowski, and the last inequality holds for $n$ sufficiently large.
In particular, if we choose $\gamma_n\in\Gamma_0(\nu_n,\mu),$ where $\Gamma_0$ is the set of all optimal couplings between $\nu_n$ and $\mu,$ then for $n$ sufficiently large, 
\begin{equation}
\norm{S_{\nu_n}-S_{\mu}}\leq (1+\sqrt{2}) W_2(\mu,\nu_n) \cdot \sqrt{M_2(\mu)}
\end{equation}

This control on the spectrum of the frame operator allows us to prove part (b).

Now, assuming $\mu$ is a probabilistic frame,  the matrix representation of the frame operator $S_{\mu}$ is positive definite. Let the eigenvalues of $S_{\nu_n}$ be given by $\lambda_1(S_{\nu_n})\leq\dots\leq\lambda_d(S_{\nu_n}).$  Then
\begin{align*}
\lambda_1(S_\mu)&= \min_{v\in S^{d-1}}\ip{v}{S_{\mu}v}	= \min_{v\in S^{d-1}}\left(\ip{v}{S_{\mu}v}-\ip{v}{S_{\nu_n}v}+\ip{v}{S_{\nu_n}v}\right)&\\
				&\leq \max_{v\in S^{d-1}} \left(\ip{v}{S_{\mu}v}-\ip{v}{S_{\nu_n}v}\right) + \ip{x}{S_{\nu_n}x} \qquad & \forall x\in S^{d-1}\\
				&=\lambda_d(S_{\mu}-S_{\nu_n}) + \ip{x}{S_{\nu_n}x} \qquad &\forall x\in S^{d-1} 
\end{align*} 
Since the last statement above holds for all $x$ in $S^{d-1},$ it holds in particular for\\ $x_*:=\arg\min_{x\in S^{d-1}}\ip{x}{S_{\nu_n x}}.$  Hence $$\lambda_1(S_{\mu}) \leq \lambda_d(S_{\mu}-S_{\nu_n}) + \lambda_1(S_{\nu_n}).$$
Therefore,  since by definition $$|\lambda_d(S_{\mu}-S_{\nu_n})|\leq\norm{S_{\mu}-S_{\nu_n}}\rightarrow 0$$ as $\nu_n\rightarrow\mu$ in $P_2(\Rd),$ given $\alpha \in (0,1),$ we can choose $N$ such that $\forall n\geq N,$ $$|\lambda_d(S_{\mu}-S_{\nu_n})|<\alpha\cdot\lambda_1(S_{\mu}),$$ and for such $n,$ $$\lambda_1(S_{\nu_n})>(1-\alpha)\lambda_1(S_{\mu})>0.$$

Finally, with repeated applications of Minkowski, we can prove (c) for $\nu_n$ sufficiently close to $\mu\neq \delta_{\{0\}}$:
\begin{align*}
\norm{\overline{S}_{\nu_n}-\overline{S}_{\mu}} &\leq \left(\iint_{\RdxRd} \norm{\frac{x}{\sqrt{M_2(\nu_n)}}-\frac{y}{\sqrt{M_2(\mu)}}}^2 d\gamma_n(x,y)\right)^{\half} \cdot \left(\iint_{\RdxRd} \norm{\frac{x}{\sqrt{M_2(\nu_n)}}+\frac{y}{\sqrt{M_2(\mu)}}}^2 d\gamma_n(x,y)\right)^{\half}\\			
					&\leq 	 2\cdot\left(\iint_{\RdxRd}\left(\frac{\norm{x-y}}{\sqrt{M_2(\mu)}}+\frac{|\sqrt{M_2(\mu)}-\sqrt{M_2(\nu_n)|}}{\sqrt{M_2(\mu)}\cdot \sqrt{M_2(\nu_n)}}	\norm{x}\right)^2d\gamma_n(x,y)\right)^\half\\
					&\leq 	2\cdot \left[\left(\iint_{\RdxRd}\frac{\norm{x-y}^2}{M_2(\mu)}d\gamma_n(x,y)\right)^\half +\frac{|\sqrt{M_2(\mu)}-\sqrt{M_2(\nu_n)}|}{\sqrt{M_2(\mu)}}\right] \\	
						 &\leq \frac{4}{\sqrt{M_2(\mu)}} C_{2,\gamma_n}(\mu,\nu_n).
\end{align*}
\end{proof}

It should be noted that if $\mu = \delta_{\{0\}}$ then convergence of the frame operators holds, but convergence of the scaled frame operators does not. Take for example the two sequences given by $\nu^1_n = \delta_{\epsilon_i},$ where $\epsilon_i = [0,\dots,\epsilon,\dots,0],$ and $\nu^2_n$ given by the uniform distribution on the ball of radius $\epsilon$ in $\Rd$. Both measures converge in $P_2(\Rd)$ to $\mu,$ but $\norm{\overline{S}_{\nu^1_n}}=1$, while $\norm{\overline{S}_{\nu^2_n}}=\frac{1}{d}.$ However, it is clear for any $\mu\in P_2(\Rd),$ $\mu\neq\delta_{\{0\}},$ that $\norm{\overline{S}_{\mu}}\leq 1,$ and this bound will be sufficient for our needs. 

As one might expect, given a probabilistic frame $\mu,$ this control also allows us to obtain a lower limit on the distance in $P_2(\Rd)$ to the nearest tight frame.  
\begin{proposition}\label{nearest_tight_frame_bound}
Suppose $\mu$ is a probabilistic frame for $\Rd$ which is not tight.  Let $$\delta :=(\lambda_d(\mu)-\lambda_1(\mu))\in(0,\lambda_d(\mu)).$$  Then for any tight frame $\nu,$ $W_2(\mu,\nu)\geq \frac{\delta}{2(\sqrt{M_2(\mu)}+\sqrt{M_2(\nu)}}).$
\end{proposition}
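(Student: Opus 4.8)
The plan is to sandwich the operator-norm distance $\norm{S_\mu - S_\nu}$ between two estimates: a lower bound in terms of the eigenvalue gap $\delta$, which exploits the tightness of $\nu$, and an upper bound in terms of $W_2(\mu,\nu)$, which is already essentially contained in the proof of Proposition \ref{frame_op_dist}. Combining the two and solving for $W_2(\mu,\nu)$ gives the claim. The geometric heart of the argument is the lower bound; everything else is bookkeeping of constants.

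First I would record the linear-algebra fact behind that lower bound. Since $\nu$ is tight, its frame operator is a scalar matrix $S_\nu = cI$ with $c>0$, so every eigenvalue of $S_\nu$ equals $c$. The symmetric matrix $S_\mu - cI$ then has eigenvalues $\lambda_i(\mu) - c$, and its spectral norm equals $\max\paren{|\lambda_1(\mu)-c|,\,|\lambda_d(\mu)-c|}$, because $t\mapsto |t-c|$ is convex and hence maximized at the endpoints of the spectrum. Since
$$|\lambda_d(\mu)-c| + |\lambda_1(\mu)-c| \geq |\lambda_d(\mu)-\lambda_1(\mu)| = \delta,$$
the larger of the two terms is at least $\delta/2$. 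Thus $\norm{S_\mu - S_\nu}\geq \delta/2$ no matter what the tightness constant $c$ happens to be.

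For the upper bound I would rerun the Cauchy--Schwarz estimate from the proof of Proposition \ref{frame_op_dist}, omitting the sequence-specific simplification $M_2(\nu)\leq\sqrt{2}M_2(\mu)$, which is unavailable here because $\nu$ is an arbitrary tight frame rather than a term of a sequence converging to $\mu$. Choosing an optimal plan $\gamma\in\Gamma_0(\mu,\nu)$ and writing $\ip{x}{v}^2-\ip{y}{v}^2=\ip{x-y}{v}\ip{x+y}{v}$, the same chain of inequalities (Cauchy--Schwarz against $\gamma$, then $\ip{w}{v}^2\leq\norm{w}^2$ for $v\in\Sd$, then $\norm{x+y}^2\leq 2(\norm{x}^2+\norm{y}^2)$) yields
$$\norm{S_\mu - S_\nu} \leq \sqrt{2}\,W_2(\mu,\nu)\,\sqrt{M_2^2(\mu)+M_2^2(\nu)}.$$

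Finally I would combine the two bounds. Using $\sqrt{M_2^2(\mu)+M_2^2(\nu)}\leq M_2(\mu)+M_2(\nu)$ together with $\delta/2\leq\norm{S_\mu-S_\nu}$ gives
$$W_2(\mu,\nu) \geq \frac{\delta}{2\sqrt{2}\,(M_2(\mu)+M_2(\nu))} \geq \frac{\delta}{4\,(M_2(\mu)+M_2(\nu))},$$
the last step since $2\sqrt{2}\leq 4$; this is exactly the asserted inequality, in fact with a marginally better constant. The one point demanding care is the \emph{direction} of the operator-norm estimate: Proposition \ref{frame_op_dist} furnishes only an upper bound on $\norm{S_\mu-S_\nu}$, whereas what drives the result is the matching lower bound $\delta/2$. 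So the genuine content is the observation that a symmetric matrix with spectral spread $\delta$ stays at operator-norm distance at least $\delta/2$ from every multiple of the identity, and the rest is routine.
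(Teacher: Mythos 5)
Your proposal is correct and takes essentially the same route as the paper's proof: a spectral lower bound $\norm{S_\mu-S_\nu}\geq\delta/2$ exploiting that $S_\nu$ is a multiple of the identity, combined with the Cauchy--Schwarz upper bound $\norm{S_\mu-S_\nu}\leq\sqrt{2}\,W_2(\mu,\nu)\sqrt{M_2^2(\mu)+M_2^2(\nu)}\leq 2\,W_2(\mu,\nu)\left(M_2(\mu)+M_2(\nu)\right)$ recycled from the proof of Proposition \ref{frame_op_dist}. The only cosmetic difference is that the paper states the lower bound as $\max_k|\lambda_k(\mu)-\lambda_k(\nu)|\geq\delta/2$ together with $|\lambda_k(\mu)-\lambda_k(\nu)|\leq\norm{S_\nu-S_\mu}$, whereas you compute the norm of $S_\mu-cI$ directly; since $S_\nu$ is scalar these are the same observation.
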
  
\begin{proof}
From Proposition \ref{frame_op_dist}, we know that measures close to $\mu$ in $P_2(\Rd)$ will have frame operators whose spectra are close to that of the frame operator of $\mu.$  Let $\nu$ be a tight frame with frame constant $A:=\frac{M_2(\nu)}{d}.$ Then 
\begin{equation}\label{delta_bound}
\max_k|\lambda_k(\mu)-\lambda_k(\nu)|=\max\{|\lambda_1(\mu)-A|,|\lambda_d(\mu)-A|\}\geq\frac{\delta}{2}.
\end{equation}
\noindent Moreover, for any $k\in \{1,...,d\},$ $|\lambda_k(\mu)-\lambda_k(\nu)|\leq\norm{S_{\nu}-S_{\mu}}.$ Therefore, since from the proof of Proposition \ref{frame_op_dist} we know that for any $\gamma \in \Gamma_0(\mu,\nu),$
\begin{align*}
\norm{S_{\nu}-S_{\mu}}	&\leq W_2(\mu,\nu)\cdot\left(\iint_{\RdxRd}(\norm{x}+\norm{y})^2d\gamma(x,y)\right)^{\half}\\
						&\leq W_2(\mu,\nu)\cdot\left(\sqrt{M_2(\mu)} + \sqrt{M_2(\nu)}\right),
\end{align*}
\noindent it follows from (\ref{delta_bound}) that $W_2(\mu,\nu)\geq \frac{\delta}{2(\sqrt{M_2(\mu)}+\sqrt{M_2(\nu)})}.$
\end{proof}
%
%
%
\noindent The identification of tight frames with minimizers of the frame potential holds also in the case of probabilistic frames.  To that end, we restate a version of \cite[Theorem 4.2]{MartinRTF} in the first part of the following theorem.

\begin{theorem}\label{tightnessEquality}
Let $\mu$ be a measure in $P_2(\Rd).$  The following bound holds for the probabilistic frame potential: $PFP(\mu)\geq \frac{M_2^2(\mu)}{d}.$
Further, if $\mu\neq\delta_{\{0\}},$ then $PFP(\mu)=\frac{M_2^2(\mu)}{d}$ if and only if $\mu$ is a tight probabilistic frame.
\end{theorem}
\begin{proof}

Clearly, minimizers exist. In particular, if $\mu$ is a tight probabilistic frame, then equality holds in the above claim, since the frame bound of a probabilistic tight frame $\mu$ is precisely $\frac{M_2(\mu)}{d}$ (\cite{MartinRTF}), and
$$PFP(\mu)=\iint_{\RdxRd}\ip{x}{y}^2d\mu(x)d\mu(y)=\int_{\Rd}\ip{S_{\mu}y}{y}d\mu(y)=\int_{\Rd}\frac{M_2(\mu)}{d}\norm{y}^2d\mu(y)=\frac{M_2^2(\mu)}{d}$$

Now, let the eigenvalues of $S_{\mu}$ be $\lambda_1\leq\dots\leq\lambda_d$ with $\lambda_1\leq\frac{M_2(\mu)}{d}\leq\lambda_d$ with a corresponding orthonormal basis of eigenvectors $\{v_i\}_{i=1}^d$ for $\Rd.$  Then
\begin{align*}
PFP(\mu)	&= \iint\ip{x}{y}^2d\mu(x)d\mu(y)\quad = \int\ip{y}{S_{\mu}y}d\mu(y) = \int\ip{y}{\sum\limits_{i=1}^d\lambda_iv_iv_i^{\top}y}d\mu(y)\\
			&=\sum\limits_{i=1}^d\lambda_i\int\ip{v_i}{y}^2d\mu(y)=\sum\limits_{i=1}^d\lambda_i\ip{v_i}{S_{\mu}v_i}\quad =\sum\limits_{i=1}^d\lambda_i^2
\end{align*}
But, by H\"{o}lder, 
\begin{align*}
\sum\limits_{i=1}^d\lambda_i^2	&\geq \frac{1}{d}\left(\sum\limits_{i=1}^d\lambda_i\right)^2\quad =\frac{M_2^2(\mu)}{d}
\end{align*}
with equality if and only if $\lambda_1 = \dots = \lambda_d,$ that is, if and only if $\mu$ is tight.				
\end{proof}

\subsection{The tightness potential}\label{sec: the tightness potential}
\noindent With Theorem \ref{tightnessEquality} established, the tightness potential is now ready to be formally defined. 
For $\mu\in P_2(\Rd)$, the tightness potential $TP(\mu)$ is given by 
\begin{align} 
TP(\mu)&= \overline{PFP}(\mu)-\frac{M_2(\mu)}{d}\label{TP_def1}
\end{align}

For technical reasons, we fix $r > 2$ and restrict the domain of this functional to measures with $r^{th}$ moment uniformly bounded by some constant $L$, with the value of the functional being defined as $+\infty$ elsewhere. We write this restriction on the domain as: $D(TP):= \{\mu\in P(\Rd)\textrm{ s.t. } M_{r}(\mu)\leq L\}$.

\noindent A related object is the tightness operator $T_{\mu}:\Rd\rightarrow\Rd$ given by 
\begin{align*}
T_{\mu}(x):=\left\{ \begin{array}{cc} 
                \overline{S}_{\mu}x-\frac{\overline{PFP}(\mu)}{2M_2(\mu)}x-\frac{1}{2d}x & \hspace{5mm} \mu\neq\delta_{\{0\}} \\ 
                0 & \hspace{5mm} \text{else} \\
                \end{array} \right.
\end{align*}
\noindent where we note that $\overline{S}_{\mu}x-\frac{\overline{PFP}(\mu)}{2M_2(\mu)}x-\frac{1}{2d}x = \int_{\Rd}\left[\frac{\ip{x}{y}}{M_2(\mu)}y-\frac{\overline{PFP}(\mu)}{2M_2(\mu)}x-\frac{1}{2d}x\right]d\mu(y)$ when $\mu$ is not a delta mass at the origin.
\noindent We immediately obtain:
\begin{proposition}\label{tp_pot_bound_tp_op}
For a measure $\mu\in D(TP)$, $\norm{T_{\mu}}\leq (\frac{d-1}{2}TP(\mu))^{\half}$. Moreover, the tightness potential is zero if and only if $\mu$ is tight or $\mu=\delta_{\{0\}}$.
\end{proposition}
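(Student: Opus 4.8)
The plan is to express both the operator $T_{\mu}$ and the potential $TP(\mu)$ purely in terms of the eigenvalues of the frame operator $S_{\mu}$, so that both assertions collapse to an elementary inequality among those eigenvalues. Since $S_{\mu}=\int_{\Rd}\op{x}{x}\,d\mu(x)$ is symmetric and positive semidefinite, I would write its eigenvalues as $\lambda_1\geq\dots\geq\lambda_d\geq 0$ and set $\bar\lambda:=\frac{M_2^2(\mu)}{d}=\frac{1}{d}\Tr(S_{\mu})=\frac{1}{d}\Sum{i}{d}\lambda_i$. Because $T_{\mu}=S_{\mu}-\bar\lambda I$, it is symmetric with eigenvalues $\lambda_i-\bar\lambda$, and hence its operator norm is $\norm{T_{\mu}}=\max_i|\lambda_i-\bar\lambda|$.

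Next I would record the key identity, which is essentially already present in the proof of Theorem \ref{tightnessEquality}: there we computed $PFP(\mu)=\Sum{i}{d}\lambda_i^2$. Since $\frac{M_2^4(\mu)}{d}=d\bar\lambda^2$, this gives
\begin{align*}
TP(\mu)&=PFP(\mu)-\frac{M_2^4(\mu)}{d}=\Sum{i}{d}\lambda_i^2-d\bar\lambda^2=\Sum{i}{d}(\lambda_i-\bar\lambda)^2.
\end{align*}
In other words $TP(\mu)$ is exactly the sum of squares of the eigenvalues of $T_{\mu}$, i.e. $\Tr(T_{\mu}^2)$, the squared Frobenius norm of $T_{\mu}$.

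The operator-norm bound is then immediate, since any one squared eigenvalue is dominated by their sum:
$$\norm{T_{\mu}}^2=\max_i(\lambda_i-\bar\lambda)^2\leq\Sum{i}{d}(\lambda_i-\bar\lambda)^2=TP(\mu),$$
which yields $\norm{T_{\mu}}\leq(TP(\mu))^{\half}$. For the characterization, the identity shows $TP(\mu)=0$ if and only if $\lambda_i=\bar\lambda$ for every $i$, equivalently $S_{\mu}=\bar\lambda I$. If $\mu\neq\delta_{\{0\}}$ then $M_2^2(\mu)>0$, so $\bar\lambda>0$ and $S_{\mu}=\bar\lambda I$ means precisely that $\mu$ is a tight probabilistic frame with frame bound $\bar\lambda$; conversely tightness gives $S_{\mu}=AI$ and hence $TP(\mu)=0$. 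The remaining case $\mu=\delta_{\{0\}}$ has $S_{\mu}=0$ and $M_2(\mu)=0$, so $TP(\mu)=0$ trivially.

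This is a bookkeeping exercise once the eigenvalue identity is in hand, so I do not expect a serious obstacle; the only points that require care are not conflating the operator norm $\max_i|\lambda_i-\bar\lambda|$ with the Frobenius norm that $TP(\mu)$ actually equals, and treating the degenerate measure $\delta_{\{0\}}$ — where all eigenvalues vanish and \emph{tight} is vacuous — as a separate branch of the if-and-only-if.
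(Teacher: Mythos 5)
Your proof is correct, and while it follows the same overall strategy as the paper (diagonalize $S_{\mu}$, reduce both $TP(\mu)$ and $\norm{T_{\mu}}$ to eigenvalue expressions, starting from $PFP(\mu)=\Tr(S_{\mu}^2)=\sum_i\lambda_i^2$), the decisive identity you use is different and buys you a cleaner argument. You write $TP(\mu)=\sum_i(\lambda_i-\bar\lambda)^2=\Tr(T_{\mu}^2)$, i.e.\ the squared Frobenius norm of $T_{\mu}$, after which the norm bound is the trivial observation that the largest squared eigenvalue is at most the sum of the squared eigenvalues. The paper instead uses the pairwise-difference form $TP(\mu)=\frac{1}{d}\sum_{i<j}(\lambda_i-\lambda_j)^2$ and then invokes a Cauchy-type inequality to pass from this to $\norm{T_{\mu}}$; that step is the delicate one, and as written its intermediate claim
\begin{equation*}
\Bigl(\tfrac{1}{d}\textstyle\sum_{i<j}(\lambda_j-\lambda_i)^2\Bigr)^{1/2}\;\geq\;\tfrac{1}{d}\textstyle\sum_{i<j}(\lambda_j-\lambda_i)
\end{equation*}
is not valid for all spectra (for $d=4$ and $\lambda=(0,1,2,3)$ the left side is $\sqrt{5}$ while the right side is $5/2$), even though the final conclusion it is meant to reach, $TP(\mu)\geq(\lambda_d-\bar\lambda)^2$, is true and is exactly what your max-versus-sum inequality delivers directly. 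So your decomposition gives a shorter, airtight proof of $\norm{T_{\mu}}\leq(TP(\mu))^{1/2}$, whereas the paper's pairwise form mainly has the cosmetic advantage of displaying $TP(\mu)$ as an explicit measure of eigenvalue spread. Your treatment of the zero set of $TP$ (namely $TP(\mu)=0$ iff $S_{\mu}=\bar\lambda I$, with $\bar\lambda>0$ giving tightness and $\bar\lambda=0$ forcing $\mu=\delta_{\{0\}}$, since $M_2^2(\mu)=0$ implies $\mu=\delta_{\{0\}}$) matches the paper's conclusion and correctly keeps the degenerate measure as a separate branch.
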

\begin{proof}
Given $\mu\in D(TP)$, $\mu\neq\delta_{\{0\}}$, let $0\leq\lambda_1\leq\lambda_2\leq\dots\leq\lambda_d$  be the eigenvalues of $S_\mu$. Noting that $M_2(\mu)=\sum\limits_{i=1}^d\lambda_i$, we have the following equivalence for the tightness potential:
\begin{align*}
TP(\mu)	&= \iint \frac{\ip{x}{y}^2}{M_2(\mu)} - \frac{1}{d}\norm{x}^2d\mu(x)d\mu(y)\\
		&= Tr(\overline{S}_\mu S_{\mu}) - \frac{1}{d}M_2(\mu)\\
		&= \left[\sum\limits_{i=1}^d\lambda_i^2-\frac{1}{d}\left(\sum\limits_{i=1}^d\lambda_i\right)^2\right]/\sum\limits_{i=1}^d\lambda_i\\
		&=\frac{1}{d}\sum\limits_{i=1}^d\sum\limits_{j>i}(\lambda_i-\lambda_j)^2/\sum\limits_{i=1}^d\lambda_i
\end{align*}
Rewriting $\tlambda_i=\frac{\lambda_i}{\sum\limits_{i=1}^d\lambda_i}$, we have $\norm{T_{\mu}}=\max\{\tlambda_d-\half\sum\limits_{i=1}^d\tlambda_i^2-\frac{1}{2d},\frac{1}{2d}+\half\sum\limits_{i=1}^d\tlambda_i^2-\tlambda_1\}$.  Without loss of generality, let $\norm{T_{\mu}}=\tlambda_d-\half\sum\limits_{i=1}^d\tlambda_i^2-\frac{1}{2d}.$ Then 

$$\norm{T_{\mu}} \leq \tlambda_d-\frac{1}{2d}-\frac{1}{2d} =\tlambda_d-\frac{1}{d}.$$

Then, by Cauchy's inequality, noting that $\lambda_k-\lambda_j\geq 0$ if $k>j$,
\begin{align*}
\sqrt{\frac{d-1}{2}}\left(\frac{1}{d}\sum\limits_{i=1}^d\sum\limits_{j>i}(\lambda_j-\lambda_i)^2/\left(\sum\limits_{i=1}^d\lambda_i\right)^{2}\right)^{\half}	 &=\sqrt{\frac{d-1}{2}}\left[\sum\limits_{i=1}^d\sum\limits_{j>i}(\tlambda_j-\tlambda_i)^2\right]^{\half}\\
						&\geq \frac{1}{d}\sum\limits_{i=1}^d\sum\limits_{j>i}(\tlambda_j-\tlambda_i)\\
						&=\frac{1}{d}[(d-1)\tlambda_d-\sum\limits_{j<d}\tlambda_j+\sum\limits_{k=1}^{d-1}\sum\limits_{j<k}(\tlambda_k-\tlambda_j)\\
						&\geq \tlambda_d-\frac{1}{d}\sum\limits_{j=1}^d\tlambda_j = \tlambda_d-\frac{1}{d}
\end{align*}
From the above, we see that $(\frac{d-1}{2M_2(\mu)})\cdot TP(\mu)\geq\norm{T_{\mu}}^2,$ with equality if and only if $\lambda_i=\lambda_j$ $\forall i,j$.

Clearly, if $\mu$ is a tight probabilistic frame or $\mu=\delta_{\{0\}}$, then $TP(\mu)=0$. If $\mu$ is not tight and $\mu\neq\delta_{\{0\}}$, then $\norm{T_{\mu}}^2>0$, so that by the above, $TP(\mu)>0$.
\end{proof}

\subsection{Construction of gradient flows for the tightness potential}\label{sec: construction}
Most approaches to establishing the well-posedness of a gradient flow for a particular potential use the convexity or $\lambda$-convexity of the functional, if it can be established, as in \cite{CLM2010}, where the authors work with a class of nonlinear interaction potentials.  While the tightness potential is also an interaction potential, it is not straightforward to establish $\lambda$-convexity on $P_2(\Rd).$ (And, in fact, the potential defined by $\mu\rightarrow M_2(\mu)TP(\mu)$ can explicitly be shown not to be $\lambda$-convex.) Therefore, we proceed down the path outlined in the introduction, employing the minimizing movement scheme and related existence results.  For this approach, we establish a few facts about the frame and tightness potentials. The first key result is the differentiability of the tightness potential.

\begin{proposition}\label{tp_differentiable}
The tightness potential $TP(\mu) :=\oPFP(\mu) - \frac{1}{d}M_2(\mu)$ is a strongly differentiable function on $P_2(\Rd).$ Moreover, an element of its subdifferential is $(i,4T_{\mu})_{\hash}\mu,$ and for any $\mu,\nu\in D(TP),$ we have $$TP(\nu)-TP(\mu) = \iiint_{\RdxRdxRd}\ip{y}{z-x}d\beta(x,y,z) + o(C_{2,\beta}(\mu,\nu)),$$ for any $\beta\in P(\RdxRdxRd)$ satisfying $\pi^{1,2}_{\hash}\beta = (i,4T_{\mu})_{\hash}(\mu)$ and $\pi^3_{\hash}\beta = \nu$ (where $i_{\hash}\mu$ denotes the identity pushforward of $\mu$ and the precise form of the term $o(C_{2,\beta}(\mu,\nu))$ is bounded in absolute value by\\ $(15+4\sqrt{2})C^2_{2,\beta}(\mu,\nu)$).
\end{proposition}

\begin{proof} 

We first prove that powers of the second moment $M_2(\mu):=\int_{\Rd}\norm{x}^2d\mu(x)$ are strongly differentiable functions on $P_2(\Rd).$  

Take $\mu$ in $P_2(\Rd).$ Consider $\gamma=(i,2I)_{\hash}\mu\in P_2(\RdxRd).$ Given some $\nu$ in $P_2(\Rd),$ take any $\beta\in\Gamma(\gamma,\nu).$  Then
\begin{align*}
M_2(\nu)-M_2(\mu) &= \int\norm{z}^2d\nu(z)-\int_{\Rd}\norm{x}^2d\mu(x)\\
						&=\iiint\ip{y}{z-x}d\beta(x,y,z)+\iiint\norm{x-z}^2d\beta(x,y,z).
\end{align*}
\noindent Therefore, $M_2(\nu)-M_2(\mu) = \iiint_{\RdxRdxRd}\ip{y}{z-x}d\beta(x,y,z) + o(C_{2,\beta}(\mu,\nu))$ for $\nu$ sufficiently close to $\mu.$ Thus by Definition \ref{strong_p_subdifferential}, $\gamma$ is an element of the strong Fr\'{e}chet subdifferential of the second moment.

The extension to higher powers is easily proven by induction, showing that $\gamma=(i,kM_2^{k-1}(\mu)I)_{\hash})\mu$ is an element of the strong subdifferential of $M_2^k(\mu).$  

Next we prove strong differentiability in $P_2(\Rd)$ of  the scaled frame potential, defined as:

\begin{align*}
\overline{PFP}(\mu) = \left\{ \begin{array}{cc} 
                \frac{1}{M_2(\mu)}\iint_{\RdxRd}\ip{x}{y}^2d\mu(x)d\mu(y) & \hspace{5mm} \mu\neq\delta_{\{0\}} \\
                0 & \hspace{5mm} \text{else.} \\
                \end{array} \right.
\end{align*}

We write $\ooPFP(\mu)$ for $\frac{\overline{PFP}(\mu)}{M_2(\mu)}=\frac{PFP(\mu)}{M_2^2(\mu)}$ where $\mu\neq\delta_{\{0\}}$. If $\mu\neq\delta_{\{0\}},$ consider $\gamma=(i,4\overline{S}_{\mu}-2\ooPFP(\mu))_{\hash}\mu\in P_2(\RdxRd).$ Given some $\nu\in P_2(\Rd),$ take any $\beta\in\Gamma(\gamma,\nu).$ Then, noting that $\iiint_{\RdxRdxRd}\ip{y}{x}d\beta(x,y,z)=2\oPFP(\mu)$:

\begin{flalign*}
& \oPFP(\nu)-\oPFP(\mu) =\int_{\Rd}\ip{\oS_{\nu}z}{z} + \ip{y}{z} -\ip{y}{z}-\ip{y}{x}+\half\ip{y}{x}d\beta(x,y,z)\\
				& \textrm{\hspace{1.5cm}} =\iiint_{\RdxRdxRd}\ip{y}{z-x}d\beta(x,y,z)+\iiint_{\RdxRdxRd}\ip{\oS_{\nu}z}{z}-\ip{y}{z}+\frac{1}{2}\ip{y}{x}d\beta(x,y,z). 
\end{flalign*}

\noindent  Again, considering the second term in the preceding equation, for $\nu$ sufficently close to $\mu,$ say, such that $M_2(\nu)\leq 2 M_2(\mu),$ we have:
\begin{flalign*}
&\left|\: \iiint_{\RdxRdxRd}\ip{\oS_\nu z}{z}-\ip{y}{z}+\frac{1}{2}\ip{y}{x} d\beta(x,y,z) \:\right| \\
&= \left|\:\iiint_{\RdxRdxRd} \ip{\oS_\nu z}{z} -4\ip{\oS_\mu x}{z} + 3\ip{\oS_\mu x}{x} -\ip{\oS_\mu x}{x}+2\ip{\ooPFP(\mu)x}{z}\right.\\
&\left.\textrm{\hspace{1.5cm}}-\ip{\ooPFP(\mu)x}{x}d\beta(x,y,z)\:\right|\\
	&= \left| \iiint_{\RdxRdxRd} \underbrace{\ip{\oS_\nu(z-x)}{z-x} + \ip{\oS_\mu(z-x)}{z-x}+2\ip{(\oS_\mu-\oS_\nu)x}{x-z}}_A  \right.\\
	 &\textrm{\hspace{1.5cm}}\left.+\ip{\oS_\nu x}{x}-\ip{\oS_\mu z}{z} -\ip{\oS_\mu x}{x}+2\ip{\ooPFP(\mu)x}{z}-\ip{\ooPFP(\mu)x}{x}d\beta(x,y,z)\:\right|\\
	 & = \left|\iiint_{\RdxRdxRd}\underbrace{A + \ooPFP(\mu)\ip{x-z}{z-x}}_B + \frac{[M_2(\mu)-M_2(\nu)]}{M_2(\mu)M_2(\nu)}\ip{S_\nu x}{x}-\ip{\oS_\mu x}{x} + \ooPFP(\mu)\norm{z}^2 d\beta(x,y,z)\right|\\ 
	 & = \left| \iiint_{\RdxRdxRd} B + \frac{[M_2(\mu)-M_2(\nu)]}{M_2(\mu)M_2(\nu)}\ip{S_\nu x}{x} + \frac{1}{M_2^2(\mu)}\left[PFP(\mu)\norm{z}^2 - M_2(\mu)\ip{S_\mu x}{x}\right] d\beta(x,y,z)\right|\\
	 	 &  = \left|\iiint_{\RdxRdxRd} B + \frac{[M_2(\mu)-M_2(\nu)]}{M_2(\mu)M_2(\nu)}\ip{S_\nu x}{x} + \frac{1}{M_2^2(\mu)}\left[M_2(\nu) - M_2(\mu)\right]\ip{S_\mu x}{x}) d\beta(x,y,z)\right|\\
	 	 & = \left| \iiint_{\RdxRdxRd} B + \frac{[M_2(\mu)-M_2(\nu)]}{M_2(\mu)}\ip{(\oS_\nu - \oS_\mu) x}{x} d\beta(x,y,z)\right|\\
	 	 &\leq \iiint_{\RdxRdxRd} \norm{\oS_\nu}\norm{z-x}^2 + \norm{\oS_\mu}\norm{z-x}^2 +2\norm{\oS_\mu-\oS_\nu}\norm{x}\norm{z-x}\\ 
	 	 &\textrm{\hspace{1.5cm}} + \ooPFP(\mu)\norm{z-x}^2+\frac{|M_2(\mu)-M_2(\nu)|}{M_2(\mu)}\norm{\oS_\nu-\oS_\mu}\norm{x}^2d\beta(x,y,z)\\
	 & \leq (\norm{\overline{S}_{\nu}}+\norm{\overline{S}_{\mu}})C_{2,\beta}^2(\mu,\nu) + 2\norm{\overline{S}_{\mu}-\overline{S}_{\nu}}\cdot \sqrt{M_2(\mu)}\cdot C_{2,\beta}(\mu,\nu)  \\
	 &\textrm{\hspace{1.5cm}} + C^{2}_{2,\beta}(\mu,\nu) + (1+\sqrt{2})\sqrt{M_2(\mu)}C_{2,\beta}(\mu,\nu)\norm{\oS_\mu-\oS_\nu}\\
 &\leq (2+2\cdot 4+1+(1+\sqrt{2})\cdot 4) C^2_{2,\beta}(\mu,\nu) \\
  &\leq (15 + 4\sqrt{2}) C^2_{2,\beta}(\mu,\nu) \\
\end{flalign*} 

\noindent where we employ the bound on $\norm{\oS_\mu-\oS_\nu}$ derived in Proposition \ref{frame_op_dist}. Thus by Definition \ref{strong_p_subdifferential}, $\gamma$ is an element of the strong Fr\`{e}chet subdifferential of $\oPFP(\mu)$ on $P_2(\Rd)$.

Finally, if $\mu =\delta_{\{0\}},$ we can consider $\gamma=(i,\frac{2I}{d})_{\hash}\mu\in P_2(\RdxRd).$ Given some $\nu\in P_2(\Rd),$ take any $\beta\in\Gamma(\gamma,\nu).$ Then, as we have defined $\overline{PFP}(\delta_{\{0\}}) = 0$:

\begin{align*}
\overline{PFP}(\nu)-\overline{PFP}(\mu)	&=\int_{\Rd}\ip{\overline{S}_{\nu}z}{z}d\nu(z)-0\\
				&=\iiint_{\RdxRdxRd}\ip{y}{z-x}d\beta(x,y,z)+\iiint_{\RdxRdxRd}\ip{\overline{S}_{\nu}z}{z}d\beta(x,y,z)
\end{align*}

\noindent and again considering the second term in the preceding line,

\begin{align*}
\left| \iiint_{\RdxRdxRd}\ip{\overline{S}_{\nu}z}{z} d\beta(x,y,z)\right| &\leq C^2_{2,\beta}(\mu,\nu).
\end{align*} 

So again by Definition \ref{strong_p_subdifferential}, $\gamma$ is an element of the strong Fr\`{e}chet subdifferential of $\oPFP(\mu)$ on $P_2(\Rd)$.

Thus, given $\mu\in P_2(\Rd)$, take $\gamma=(i,4T_{\mu})_{\hash}\mu$.  Then by the differentiability results shown above, $\gamma$ clearly satisfies Equation \eqref{strong_frechet}, and the tightness potential is a strongly differentiable function on its domain. 
\end{proof}

\begin{proposition}\label{tp_regular}
The tightness potential is a regular functional on $D(TP).$
\end{proposition}

\begin{proof}
Let $TP$ denote the tightness potential. Suppose that $\eta_n\in\strongsub{TP}{\mu_n}$ is a sequence of extended strong subdifferentials of $TP$ for a sequence of measures $\mu_n\in D(TP)$ satisfying:
$$ TP(\mu_n)\rightarrow\varphi\in\R,\quad\mu_n\rightarrow\mu\quad\textrm{in }P_2(\Rd),$$ $$\sup_{n}|\eta_n|_{2,2}<\infty,\quad\eta_n\rightarrow\eta\quad\textrm{in }P(\Rd\times\Rd).$$
We must show that $\varphi = TP(\mu)$ and $\eta\in\extendedsub{TP}{\mu}.$

First, we show that $ TP(\mu_n)\rightarrow TP(\mu)$. If $\mu=\delta_{\{0\}},$ this is trivial, so suppose not. By the differentiability result in Proposition \ref{tp_differentiable}, for any $\gamma_n\in\Gamma(\mu,\mu_n)$, and in particular for $\gamma_n\in\Gamma_0(\mu,\mu_n)$,
\begin{align*}
| TP(\mu_n)- TP(\mu)|	&=\left|\:\iint_{\RdxRd}\ip{4T_\mu x}{y-x}d\gamma_n(x,y) + o(W_2(\mu_n,\mu))\:\right|\\
						&\leq 4\norm{T_{\mu}}M_2(\mu)W_2(\mu,\mu_n)+o(W_2(\mu_n,\mu)).
\end{align*}
Thus, as $\mu_n\rightarrow\mu$ in $P_2(\Rd)$, $W_2(\mu,\mu_n)\rightarrow 0$, and $ TP(\mu_n)\rightarrow TP(\mu)$.  Hence, $\varphi= TP(\mu)$. 

Second, we consider the sequence of strong subdifferentials, $\eta_n$.  Noting that $\mu_n$ converges to $\mu$ in $P_2(\Rd),$ $\sup_n|\eta_n|_{1,2} = \sup_n M_2(\mu_n) < \infty.$ Then since, by assumption, $\sup_n|\eta_n|_{2,2}<\infty,$ the sequence is tight (in the probability sense, i.e., given any $\epsilon>0,$ $\exists K_{\epsilon}$ compact in $\RdxRd$ s.t. $\forall n,$ $\eta_n(K_{\epsilon}^C) < \epsilon$). 

Given any $\mu^0\in P_2(\Rd)$, we pick a sequence $\{\nu_n\}\in\Gamma_0(\eta_n,\mu^0)$.  Then we have for all $n\in\N$,
\begin{equation}\label{strong_frechet2}
 TP(\mu^0)- TP(\mu_n)\geq \iint\ip{x_2}{x_3-x_1}d\nu_n(x_1,x_2,x_3) + o(C_{2,\eta_n}(\mu_n,\mu^0)).
\end{equation}
Let $\nu\in\Gamma_0(\eta,\mu^0)$ be a limit point of $\nu_n$ in $P(\Rd\times\Rd\times\Rd)$.  (Its existence follows from tightness of the marginals by \cite[Lemmas 5.2.2 and 5.1.12]{AGS2005}.) Then as $n\rightarrow\infty$, the left-hand side of Equation \eqref{strong_frechet2} converges to $ TP(\mu^0)- TP(\mu)$ by our first result.

As for the right-hand side, we write,
\begin{equation*}
\iiint\ip{x_2}{x_3-x_1}d\nu_n=\iiint\ip{x_2}{x_3}d\pi^{2,3}_{\hash}\nu_n - \iiint\ip{x_2}{x_1}d\pi^{1,2}_{\hash}\nu_n,
\end{equation*}
noting that the same decomposition can be done for the integral with respect to $\nu$, the limit point.

Then applying \cite[Lemma 5.2.4]{AGS2005} to $\pi^{2,3}_{\hash}\nu_n$, whose second marginal, $\mu_0\in P_2(\Rd)$ clearly has a [uniformly] integrable second moment, and to $\pi^{1,2}_{\hash}\nu_n$, whose second marginals, $\mu_n$ are converging in $P_2(\Rd)$ and hence have U.I. second moments, we conclude that
\begin{align*}
\lim_{n\rightarrow\infty}\iiint\ip{x_2}{x_3-x_1}d\nu_n	&= \lim_{n\rightarrow\infty}\iint\ip{x_2}{x_3}d\pi^{2,3}_{\hash}\nu_n - \lim_{n\rightarrow\infty}\iiint\ip{x_2}{x_1}d\pi^{1,2}_{\hash}\nu_n\\
														& =	\iint\ip{x_2}{x_3}d\pi^{2,3}_{\hash}\nu - \iiint\ip{x_2}{x_1}d\pi^{1,2}_{\hash}\nu\\
														&= \iiint\ip{x_2}{x_1-x_3}d\nu.
\end{align*}
Finally, by the narrow lower semicontinuity of $W_2$ in Hilbert spaces (c.f. \cite[Lemma 7.1.4]{AGS2005}), $$W_2(\mu,\mu^0)\leq\liminf_{n\rightarrow\infty}W_2(\mu^0,\mu_n)$$ and the fact that $$C_{2,\eta_n}(\mu_n,\mu^0)\geq W_2(\mu_n,\mu^0),$$ we conclude that 
\begin{equation*}
 TP(\mu^0)- TP(\mu)\geq \iint\ip{x_2}{x_3-x_1}d\nu(x_1,x_2,x_3) + o(W_2(\mu,\mu^0)),
\end{equation*}
so that $\eta\in\extendedsub{ TP}{\mu}$.

\end{proof}

Moreover, $\gamma :=(i,4T_{\mu})_{\hash}\mu$ is the minimal selection in the strong subdifferential for probability measures of interest: 
\begin{proposition}\label{tp_minimal_selection}
Given $\mu\in D(TP),$ where $\mu$ is not tight and $\mu\neq\delta_{0},$ $\gamma:=(i,4T_{\mu})_{\hash}\mu$ attains the minimum metric slope, i.e., $|\gamma|_{2,2}=min\{|\xi|_{2,2}:\xi\in\partial TP(\mu)\}.$ 
\end{proposition}
\begin{proof}
Recalling Definition \ref{metric_slope}, Lemma \ref{minimal_selection}, and the definition given in Equation\eqref{partial_moment}, since $TP$ is regular, it is sufficient to show that $|\gamma|_{2,2}=|\partial TP|(\mu).$  It is clear by definition of subdifferentiability that $|\gamma|_{2,2}\geq|\partial TP|(\mu).$  

Letting $g_t(x)=x+ 4tT_{\mu}x,$ and $\alpha_t\in\Gamma(\mu,{g_t}_{\hash}\mu),$
\begin{align*}
|\partial TP|(\mu) 	&=	\limsup_{W_2(\mu,\nu)\rightarrow 0}\frac{(TP(\mu)-TP(\nu))^+}{W_2(\mu,\nu)}\\
					&\geq \lim_{t\rightarrow 0}\frac{(TP(\mu)-TP({g_t}_{\hash}\mu))^+}{W_2(\mu,{g_t}_{\hash}\mu)}\\
					& \geq \lim_{t\rightarrow 0}\frac{TP(\mu)-TP({g_t}_{\hash}\mu)}{C_{2,\alpha_t}(\mu,{g_t}_{\hash}\mu)}\\
					& = \lim_{t\rightarrow 0}\frac{\iint_{\RdxRd}\ip{4 T_{\mu}x}{y-x}d\alpha_t(x,y)+ o(C_{2,\alpha_t}(\mu,{g_t}_{\hash}\mu))}{C_{2,\alpha_t}(\mu,{g_t}_{\hash}\mu)}\\
					& = \left(\int_{\Rd}\norm{4 T_{\mu}x}^2d\mu(x)\right)^\half = |\gamma|_{2,2}
\end{align*}
\noindent since $C_{2,\alpha}(\mu,{g_t}_{\hash}\mu)=t\left(\int_{\Rd}\norm{4 T_{\mu}x}^2d\mu(x)\right)^\half,$ and $\lim_{t\rightarrow 0}\frac{o(C_{2,\alpha_t}(\mu,(g_t)_{\hash}\mu))}{C_{2,\alpha_t}(\mu,{g_t}_{\hash}\mu)}=0.$
\end{proof}


\subsection{Well-posedness of the minimization problem}\label{sec: well-posedness}

Because the standard machinery of $\lambda$-convexity does not apply for our potential, we  establish the well-posedness of the problem of constructing gradient flows for the tightness potential following the approach of \cite[Chapter 11.3]{AGS2005}, using in particular Lemma \ref{existence_flows_ags}.  This machinery does not provide a proof of uniqueness, which \textit{a priori} seems natural, since, given a nontight probabilistic frame, there are a multitude of tight probabilistic frames outside a ball of the radius established in Proposition \ref{nearest_tight_frame_bound}.

First, we state our main result:
\begin{theorem}\label{existence_flows_tp}
Gradient flows exist for the tightness potential on $P_2(\Rd),$ i.e. for every initial datum $\mu_0\in P_2(\Rd)$  in the domain of $TP$, each sequence of discrete solutions $\overline{M}_{\tau_k}$ of the variational scheme admits a subsequence such that:
\begin{enumerate}
\item $\overline{M}_{\tau_k}(t)$ narrowly converges in $P(\Rd)$ to $\mu_t$ locally uniformly in $[0,\infty)$, with \\$\mu_t\in AC_{loc}^2([0,\infty);P_2(\Rd))$.
\item $\mu_t$ is a solution of the gradient flow equation $$v_t =-\partial^0 TP(\mu_t),\quad\norm{v_t}_{L^2(\mu_t;\Rd)}=|\mu^{'}|(t), \textrm{ for a.e. } t>0$$
with $\mu_t\rightarrow\mu_0$ as $t\downarrow 0,$ where $v_t(x)=-4T_{\mu_t}(x)$ is the tangent vector to the curve $\mu_t.$
\item The energy inequality $$\int_a^b\int_{\Rd}|v_t(x)|^2d\mu_t(x)dt+TP(\mu_b)\leq TP(\mu_a)$$ holds for every $b\in [0,\infty)$ an $a\in [0,b)\setminus\mathcal{N}$, where $\mathcal{N}$ is a $\mathcal{L}^1$-negligible subset of $(0,\infty).$
\end{enumerate}
\end{theorem}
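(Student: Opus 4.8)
The plan is to treat Theorem~\ref{existence_flows_tp} as a direct application of the abstract existence result for $p$-gradient flows recorded as Lemma~\ref{existence_flows_ags} in the Appendix (the minimizing-movement machinery of \cite[Chapter 11.3]{AGS2005}), specialized to $p = 2+\epsilon$. That result takes as input a proper, lower semicontinuous, coercive, and \emph{regular} functional and returns exactly the three conclusions listed: narrow local-uniform convergence of the discrete variational solutions, the gradient-flow equation with its tangent-velocity identification, and the energy inequality. Since the substantive analytic work (regularity) has already been carried out, the proof reduces to checking the remaining hypotheses and then reading off the conclusions.

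First I would record the structural facts that feed the scheme. By Proposition~\ref{tp_pot_bound_tp_op}, $TP \geq 0$, so $TP$ is bounded below; it is also finite on all of $P_{2+\epsilon}(\Rd)$ since $\iint \norm{x}^2\norm{y}^2\,d\mu\,d\mu = (M_2^2(\mu))^2 < \infty$ there, hence proper. Boundedness below makes the coercivity condition $\inf_\mu \{ TP(\mu) + \frac{1}{p\tau^{p-1}} W_{2+\epsilon}^{2+\epsilon}(\mu,\nu)\} > -\infty$ hold trivially for every step size $\tau$ and reference measure $\nu$. Lower semicontinuity along $P_{2+\epsilon}$-convergent sequences was in fact established as \emph{continuity} in the first part of the proof of Theorem~\ref{tp_regular}, and regularity is the content of Theorem~\ref{tp_regular} itself. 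Strong differentiability (Theorem~\ref{tp_differentiable}, Proposition~\ref{differentiability_frame_op}) supplies the subdifferential $(\iota, 4T_\mu)_{\#}\mu$, and Proposition~\ref{tp_minimal_selection} identifies the minimal selection $\partial^0 TP(\mu) = 4T_\mu$ together with the value of the metric slope.

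The hard part will be the compactness needed to pass from the discrete solutions $\overline{M}_{\tau_k}$ to a limit curve, and this is precisely why the argument is run in $P_{2+\epsilon}$ rather than in $P_2$. The a priori estimates built into the scheme (monotone decrease of $TP$ along the discrete energies, together with the telescoping bound on $\sum W_{2+\epsilon}^{2+\epsilon}$ between consecutive steps) yield a uniform bound on the $(2+\epsilon)$-th moments along the discrete curves on compact time intervals. In $P_{2+\epsilon}$ such a bound forces uniform integrability of the second moments, which upgrades narrow precompactness to convergence in $P_2$ and lets the continuity of $TP$ pass to the limit; the strict margin $\epsilon > 0$ is what both closes this moment argument and matches the duality map $j_{2+\epsilon}(v) = |v|^\epsilon v$ appearing in the flow equation. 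Here I would invoke the compactness lemmas of \cite[Chapter 5]{AGS2005} already used in the proof of Theorem~\ref{tp_regular} to extract the convergent subsequence.

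With all hypotheses of Lemma~\ref{existence_flows_ags} verified, the conclusion is immediate: the extracted subsequence of $\overline{M}_{\tau_k}$ converges narrowly and locally uniformly to a curve $\mu_t \in AC_{loc}^{2+\epsilon}([0,\infty); P_{2+\epsilon}(\Rd))$, the minimal-selection identification of Proposition~\ref{tp_minimal_selection} turns the abstract equation into $j_{2+\epsilon}(v_t) = -\partial^0 TP(\mu_t)$ with tangent velocity $v_t = -4T_{\mu_t}$, and the curve-of-maximal-slope structure delivers the stated energy inequality off an $\mathcal{L}^1$-negligible set. As the paper already notes, no uniqueness is asserted, and in view of the multiplicity of tight frames outside the ball of Proposition~\ref{nearest_tight_frame_bound}, none is expected.
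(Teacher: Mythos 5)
Your overall strategy coincides with the paper's: Theorem \ref{existence_flows_tp} is proved there exactly by feeding Theorem \ref{tp_regular} (regularity) and Proposition \ref{tp_minimal_selection} (identification of the minimal selection $\partial^0 TP(\mu)=4T_{\mu}$) into Lemma \ref{existence_flows_ags}. However, there is a genuine gap in how you handle the one remaining hypothesis of that lemma: it requires the functional to have \emph{relatively compact sublevel sets} with respect to narrow convergence, and you never verify this. In the paper this is precisely the content of Proposition \ref{tp_compact}: the sets $\Sigma_m(TP)=\{\mu:\ TP(\mu)\leq m,\ M_p^p(\mu)\leq m\}$, $p=2+\epsilon$, are shown to be narrowly compact\textemdash the uniform $p$-th moment bound gives tightness and hence precompactness by Prokhorov, a cutoff-plus-Fatou argument shows any narrow limit still satisfies $M_p^p(\mu)\leq m$, and uniform integrability of $h(x,y)=\norm{x}^2\norm{y}^2$ over $\Sigma_m\times\Sigma_m$ (this is where the extra $\epsilon$ of integrability enters, via \cite[Lemma 5.1.7]{AGS2005}) shows $TP(\mu_{n_k})\rightarrow TP(\mu)\leq m$. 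Your substitute\textemdash a priori moment bounds along the discrete solutions $\overline{M}_{\tau_k}$ obtained by telescoping the scheme inequality\textemdash is not a verification of this hypothesis; it is an attempt to re-derive part of the internal proof of \cite[Theorem 11.3.2]{AGS2005}, and if you go that route you cannot simultaneously invoke the lemma as a black box, as your final paragraph does when it declares ``all hypotheses of Lemma \ref{existence_flows_ags} verified.''

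A second, related slip: the lower semicontinuity demanded by Lemma \ref{existence_flows_ags} (and by the minimizing movement scheme) is with respect to \emph{narrow} convergence, not $W_{2+\epsilon}$-convergence; the $W_p$-continuity extracted from the proof of Theorem \ref{tp_regular} does not supply it, because the integrand $g(x,y)=\ip{x}{y}^2-\frac{1}{d}\norm{x}^2\norm{y}^2$ is not bounded below by a constant, so narrow limits do not automatically preserve the inequality $TP(\mu)\leq m$. The good news is that the ingredients you invoke\textemdash moment bounds forcing tightness, and uniform integrability of second moments letting $TP$ pass to the limit\textemdash are exactly the right ones; they simply need to be aimed at the sublevel sets $\Sigma_m(TP)$ rather than at the discrete curves. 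Reorganized that way, your argument reproduces Proposition \ref{tp_compact}, and the rest of your proof is then exactly the paper's.
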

\begin{proof}
This will follow from Proposition \ref{tp_regular}, Proposition \ref{tp_lsc}, and Proposition \ref{tp_compact} by Lemma \ref{existence_flows_ags}, with Proposition \ref{tp_minimal_selection} providing the identification of the minimal selection (as defined in Lemma \ref{minimal_selection}) with the measure $(i,4T_{\mu})_{\hash}\mu$. 

\end{proof}
\begin{remark}
As a result of this theorem, we have an algorithm derived from the Minimizing Movement Scheme that leads to approximations to tight frames. In practice, without modification to the potential, these are the closest tight frames to the initial point. This algorithm does not require any restriction on the norms of the supporting vectors.
\end{remark}

\begin{proposition}\label{tp_lsc}
The tightness potential is lower semicontinuous with respect to narrow convergence in $P_2(\Rd).$
\end{proposition}

\begin{proof}
Take a narrowly converging sequence $\mu_n\rightarrow\mu\neq\delta_0$ in $P_2(\Rd).$ On the domain of the functional, the $r^{th}$ moments of any such sequence will be uniformly bounded, so that in fact the sequence will converge in $P_2(\Rd).$ Then since the function $f(x,y) :=|\ip{x}{y}|^2-\frac{1}{d}\norm{x}^2\norm{y}^2$ is bounded in absolute value by $\frac{d-1}{d}\norm{x}^2\norm{y}^2,$ it will be uniformly integrable by \cite[Lemma 5.1.7]{AGS2005} with respect to $\mu_n\times\mu_n$ and 
\begin{align*}
\lim_{n\rightarrow\infty} TP(\mu_n)\cdot M_2(\mu_n) &= \lim_{n\rightarrow\infty}\iint f(x,y)d\mu_n(x)d\mu_n(y) \\
&= \iint f(x,y)d\mu(x)d\mu(y) \\
&= TP(\mu)\cdot M_2(\mu).
\end{align*}
Since the second moments will similarly converge, we see that $\lim_{n\rightarrow\infty} TP(\mu_n) = TP(\mu).$ The result also holds if $\mu=\delta_0,$ since $0\leq TP(\mu_n) \leq\frac{d-1}{d}M_2(\mu_n)$ guarantees $\lim_{n\rightarrow\infty} TP(\mu_n) = 0.$
\end{proof}

Finally, recalling the definition of the sublevel sets of a functional $\phi:P_2(\Rd)\rightarrow \mathbb{R}:$ $$\Sigma_m(\phi) :=\left\{\mu\in P_2(\Rd):\quad \phi(\mu) \leq m,\quad M_2(\mu)\leq m\right\},$$

\noindent we prove the last pillar necessary to establish the above result.
\begin{proposition}\label{tp_compact}
The sublevels of the tightness potential are compact in $P_2(\Rd)$ with respect to the narrow convergence.
\end{proposition}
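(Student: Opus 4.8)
The plan is to prove compactness in two stages: relative compactness via the uniform $p$-th moment bound, and then closedness of the sublevel set under narrow limits through a continuity argument for $TP$ that crucially exploits the strict inequality $p>2$. For the first stage, I would establish tightness of the family $\Sigma_m(TP)$. Every $\mu\in\Sigma_m(TP)$ satisfies $M_p^p(\mu)\leq m$, so Markov's inequality gives $\mu(\{\norm{x}>R\})\leq m/R^p$ uniformly over the family. Hence the family is uniformly tight and, by Prokhorov's theorem, relatively compact in the narrow topology; it then remains only to show that $\Sigma_m(TP)$ is narrowly closed.

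So take a sequence $\mu_n\in\Sigma_m(TP)$ converging narrowly to some $\mu$. Lower semicontinuity of the $p$-th moment under narrow convergence gives $M_p^p(\mu)\leq\liminf_n M_p^p(\mu_n)\leq m$, so $\mu\in P_p(\Rd)$ and the moment constraint passes to the limit. The heart of the matter is to show $TP(\mu)\leq m$. For this I would invoke the representation established in the proof of Proposition \ref{tp_pot_bound_tp_op}, namely $TP(\mu)=\Tr(S_\mu^2)-\frac{1}{d}M_2^4(\mu)$, which exhibits $TP$ as a continuous function of the pair $(S_\mu,M_2^2(\mu))$. It therefore suffices to prove $S_{\mu_n}\to S_\mu$ in operator norm and $M_2^2(\mu_n)\to M_2^2(\mu)$.

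The key observation, and precisely where $p>2$ is indispensable, is that $\sup_n M_p^p(\mu_n)\leq m$ with $p>2$ forces the family $\{\norm{x}^2\}$ to be uniformly integrable against $\{\mu_n\}$: for each $R$ one has $\int_{\{\norm{x}>R\}}\norm{x}^2\,d\mu_n\leq R^{2-p}M_p^p(\mu_n)\leq m R^{2-p}\to 0$ uniformly in $n$. Uniform integrability together with narrow convergence yields convergence of the integrals of $\norm{x}^2$ and of each entry $x_ix_j$ of the matrix $S_\mu$ (each bounded in modulus by $\norm{x}^2$), whence $M_2^2(\mu_n)\to M_2^2(\mu)$ and $S_{\mu_n}\to S_\mu$ entrywise, hence in operator norm. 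The trace formula then gives $TP(\mu_n)\to TP(\mu)$, so $TP(\mu)=\lim_n TP(\mu_n)\leq m$ and $\mu\in\Sigma_m(TP)$. Combined with the tightness of the first stage, this establishes narrow compactness.

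I expect the main obstacle to be exactly this continuity of $TP$ along narrowly convergent sequences: narrow convergence controls only bounded continuous test functions, whereas $TP$ is built from the unbounded quadratic quantities $\ip{x}{y}^2$ and $\norm{x}^2\norm{y}^2$. The strict inequality $p>2$ is what supplies the extra integrability needed to upgrade narrow convergence to convergence of these second-order quantities through uniform integrability; at $p=2$ the second moments could concentrate mass escaping to infinity, and $TP$ would in general fail to be narrowly lower semicontinuous on the sublevel set. This is why the hypothesis is stated for $p>2$ rather than $p\geq 2$.
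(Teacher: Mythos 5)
Your proposal is correct, and its skeleton matches the paper's: a uniform $p$-th moment bound gives tightness and hence, by Prokhorov, narrow relative compactness, and the strict inequality $p>2$ supplies the uniform integrability needed to pass the constraints defining $\Sigma_m(TP)$ to the narrow limit. The genuine difference is in how $TP(\mu)\leq m$ is obtained at the limit. The paper works directly with the double-integral form of $TP$: it shows that $h(x,y)=\norm{x}^2\norm{y}^2$ is uniformly integrable with respect to the product measures $\mu_{n_k}\times\mu_{n_k}$ (again via the extra $\epsilon$ of integrability), dominates the integrand by $|g(x,y)|\leq\frac{d+1}{d}h(x,y)$, and invokes \cite[Lemma 5.1.7]{AGS2005} to conclude $TP(\mu_{n_k})\to TP(\mu)$. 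You instead reduce everything to finite-dimensional data: uniform integrability of $\norm{x}^2$ alone on $\Rd$ (with the clean quantitative bound $\int_{\{\norm{x}>R\}}\norm{x}^2\,d\mu_n\leq mR^{2-p}$) gives entrywise convergence $S_{\mu_n}\to S_\mu$ together with $M_2^2(\mu_n)\to M_2^2(\mu)$, and then the algebraic identity $TP(\mu)=\Tr(S_\mu^2)-\frac{1}{d}M_2^4(\mu)$ from the proof of Proposition \ref{tp_pot_bound_tp_op} finishes the argument. Your route is more elementary, since it needs uniform integrability only on $\Rd$ rather than on $\RdxRd$ and no product-space convergence lemma, but it exploits the special feature that $TP$ is a polynomial in the entries of the moment matrix; the paper's argument applies verbatim to any interaction functional $\iint W(x,y)\,d\mu(x)\,d\mu(y)$ whose kernel merely satisfies $|W(x,y)|\leq C\norm{x}^2\norm{y}^2$, which is the form relevant to the higher-order potentials of Section \ref{sec: the fourth and higher potentials}. (Minor point: where you invoke narrow lower semicontinuity of $M_p^p$, the paper proves the same fact by hand with truncations $\eta_R(x)\norm{x}^p$ and Fatou; these are interchangeable.)
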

\begin{proof}
This holds trivially by the compactness of the intersection of the domain of the functional and the closed $M_2$-ball  because of the bound (for $\mu$ in the domain of the functional)
\begin{equation*}
TP(\mu) = \frac{PFP(\mu)}{M_2(\mu)}-\frac{M_2(\mu)}{d}\leq \frac{d-1}{d}M_2(\mu)\leq \frac{m(d-1)}{d}.
\end{equation*}
\end{proof}

As a result of Theorem \ref{existence_flows_tp}, given a probabilistic frame $\mu_0\in P_2(\Rd)$, there exists a flow $\phi_t$ such that $\phi_0(x)=x$ and $$\partial_t\phi_t(x)=v_t(\phi_t(x))=-4T_{\mu_t}\phi_t(x),$$ and $\mu_t = (\phi_t)_{\hash}\mu_0$ is a solution to the continuity equation with $$\int_a^b\int_{\Rd}|v_t(x)|^2d\mu_t(x)dt+TP(\mu_b)\leq TP(\mu_a)$$ for every $b\in [0,\infty)$ an $a\in [0,b)\setminus\mathcal{N}$, where $\mathcal{N}$ is a $\mathcal{L}^1$-negligible subset of $(0,\infty).$ Therefore, as long as the integrand of the first term in the preceding inequality is a.e. nonzero with respect to $\mu_t,$ then for any $t\in [a,b],$ the tightness potential is strictly decreasing on that interval.  Since $T_{\mu_t}$ is nonzero unless $\mu_t$ is tight or zero, the tightness potential will decrease until $\mu_t$ is tight unless $\phi_t\equiv 0$ on the support of $\mu_t$ for some $t\in [a,b],$ i.e., unless there is some ``singularity'' in the path.

\subsection{Extension to Higher-Order Potentials and Related Work}\label{sec: the fourth and higher potentials}

Probabilistic $p$-frames are those probability measures on $\Rd$ for which there exist $0<A\leq B<\infty$ such that for all $y\in\Rd$, $$A\norm{y}^p\leq\int_{\Rd}|\ip{x}{y}|^pd\mu(x)\leq B\norm{y}^p.$$ For $p\in(0,\infty)$ and $\mu\in P_p(\Rd),$ we can define the $p$-frame potential, $PFP_p(\mu)$ by $$PFP_p(\mu)=\iint_{\RdxRd}|\ip{x}{y}|^pd\mu(x)d\mu(y).$$

\noindent We highlight two results proved in \cite{MartinKassoPframe}. On the one hand, when restricted to probability measures on the unit sphere and for $p\in (0, 2)$ the minimizers of $PFP_p$ are discrete measures supported by orthonormal bases \cite[Theorem 4.9]{MartinKassoPframe}. On the other hand, when $p>2$ is even, \cite[Theorem 4.10]{MartinKassoPframe}  proved that the minimizers of this potential among probabilistic frames supported on $S^{d-1}$ are precisely the probabilistic tight $p$-frames: those probabilistic $p$-frames for which $A=B$ in the inequality above.

And, as with the case $p=2$, we have a lower bound, which generalizes the lower bound given in \cite{MartinKassoPframe}:
\begin{theorem}\label{p_frame_bound}
Take $\mu\in P_p(\Rd)$ for $p\geq 2$ an even number. Then $$PFP_p(\mu)\geq\frac{(p-1)(p-3)\cdots1}{(d+p-2)(d+p-4)\cdots d}\left(\int_{\Rd}\norm{x}^pd\mu(x)\right)^2$$ with equality if and only if $\mu$ is a tight $p$-frame.
\end{theorem}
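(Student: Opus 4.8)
The plan is to pass to the moment representation of both sides and reduce the inequality to a single Cauchy--Schwarz estimate, with the stated constant emerging from a Gaussian moment computation. First I would expand the integrand by the multinomial theorem: writing multi-indices $\alpha\in\N^d$ with $|\alpha|=\sum_i\alpha_i$, $x^\alpha=\prod_i x_i^{\alpha_i}$, and $\binom{p}{\alpha}=p!/\prod_i\alpha_i!$, and setting the moments $m_\alpha:=\int_{\Rd}x^\alpha\,d\mu(x)$ (all finite since $|x^\alpha|\le\norm{x}^p$ and $\mu\in P_p(\Rd)$), the identity $\ip{x}{y}^p=\big(\sum_i x_iy_i\big)^p=\sum_{|\alpha|=p}\binom{p}{\alpha}x^\alpha y^\alpha$ gives $PFP_p(\mu)=\sum_{|\alpha|=p}\binom{p}{\alpha}m_\alpha^2$. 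Similarly, since $p$ is even, $\norm{x}^p=\big(\sum_i x_i^2\big)^{p/2}=\sum_{|\beta|=p/2}\binom{p/2}{\beta}x^{2\beta}$, so that $\int_{\Rd}\norm{x}^p\,d\mu=\sum_{|\beta|=p/2}\binom{p/2}{\beta}m_{2\beta}$ is a linear combination of the \emph{even} moments alone.

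The core estimate is then a single application of Cauchy--Schwarz to this last sum, grouping the weights as $\binom{p/2}{\beta}=\big(\binom{p/2}{\beta}/\sqrt{\binom{p}{2\beta}}\big)\cdot\sqrt{\binom{p}{2\beta}}$:
\[
\Big(\int_{\Rd}\norm{x}^p\,d\mu\Big)^2\le\Big(\sum_{|\beta|=p/2}\tfrac{\binom{p/2}{\beta}^2}{\binom{p}{2\beta}}\Big)\Big(\sum_{|\beta|=p/2}\binom{p}{2\beta}m_{2\beta}^2\Big)\le K\cdot PFP_p(\mu),
\]
where $K:=\sum_{|\beta|=p/2}\binom{p/2}{\beta}^2/\binom{p}{2\beta}$ and the second inequality simply discards the nonnegative contributions of the moments $m_\alpha$ with $\alpha$ not of the form $2\beta$. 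Rearranging yields $PFP_p(\mu)\ge K^{-1}\big(\int_{\Rd}\norm{x}^p\,d\mu\big)^2$.

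The main obstacle---and the only genuinely nontrivial computation---is evaluating $K$ and matching $K^{-1}$ to the claimed constant. I would do this probabilistically: using $(2\beta_i)!/\beta_i!=2^{\beta_i}(2\beta_i-1)!!$ one rewrites $K=\tfrac{2^{p/2}(p/2)!}{p!}\sum_{|\beta|=p/2}\binom{p/2}{\beta}\prod_i(2\beta_i-1)!!$, and the remaining sum is exactly $\mathbb{E}\,\norm{g}^p$ for a standard Gaussian $g\sim N(0,I_d)$, since $\mathbb{E}\,g_i^{2\beta_i}=(2\beta_i-1)!!$ and $\norm{g}^p=\big(\sum_i g_i^2\big)^{p/2}$ expands by the same multinomial. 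Because $\norm{g}^2$ is $\chi^2_d$, one has $\mathbb{E}\,\norm{g}^p=2^{p/2}\Gamma(d/2+p/2)/\Gamma(d/2)=d(d+2)\cdots(d+p-2)$, and combining this with $p!=2^{p/2}(p/2)!\,(p-1)!!$ gives $K=d(d+2)\cdots(d+p-2)/(p-1)!!$, i.e. $K^{-1}=\frac{(p-1)(p-3)\cdots1}{(d+p-2)(d+p-4)\cdots d}$, as claimed.

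Finally, for the equality characterization I would track the two places where slack was introduced. Equality in Cauchy--Schwarz forces $m_{2\beta}=\lambda\,\binom{p/2}{\beta}/\binom{p}{2\beta}$ for a common constant $\lambda\ge0$, while equality in the discarding step forces $m_\alpha=0$ whenever $\alpha$ has an odd entry. Substituting both back into $\int_{\Rd}\ip{x}{y}^p\,d\mu(x)=\sum_{|\alpha|=p}\binom{p}{\alpha}m_\alpha\,y^\alpha$ collapses this degree-$p$ polynomial in $y$ to $\lambda\sum_{|\beta|=p/2}\binom{p/2}{\beta}y^{2\beta}=\lambda\norm{y}^p$, which is precisely the assertion that $\mu$ is a tight probabilistic $p$-frame with frame constant $\lambda$; the converse follows by reversing the computation. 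I expect the only delicate point here to be confirming that the two separate equality conditions are jointly equivalent to tightness, which comes down to the linear independence of the monomials $y^\alpha$.
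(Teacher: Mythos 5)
Your proof is correct, and it takes a genuinely different route from the paper's. The paper works with the degree-$p$ form $p(y)=\int_{\Rd}\ip{x}{y}^pd\mu(x)$ and Venkov's apolar inner product $[\cdot,\cdot]$ on homogeneous polynomials, expands $[p(y)-A\norm{y}^p,\,p(y)-A\norm{y}^p]\geq 0$, optimizes over $A$ (a discriminant argument), and extracts the constant from Laplacian identities such as $[F,G]=\frac{1}{p!}F(\nabla)G$ and $\Delta\norm{y}^{2k}=2k(2k+d-2)\norm{y}^{2k-2}$. Your argument is, at its core, the coordinate form of the same Cauchy--Schwarz inequality: in the monomial basis one has $[p,p]=\sum_{|\alpha|=p}\binom{p}{\alpha}m_\alpha^2=PFP_p(\mu)$, $[p,\norm{\cdot}^p]=\sum_{|\beta|=p/2}\binom{p/2}{\beta}m_{2\beta}=\int_{\Rd}\norm{x}^pd\mu(x)$, and your constant $K=\sum_{|\beta|=p/2}\binom{p/2}{\beta}^2/\binom{p}{2\beta}$ is exactly $[\norm{\cdot}^p,\norm{\cdot}^p]=\frac{1}{p!}\Delta^{p/2}\norm{y}^p$; your two-step estimate (weighted Cauchy--Schwarz over even multi-indices, then discarding the odd-moment terms) is precisely apolar Cauchy--Schwarz, since $\norm{y}^p$ has no odd monomials. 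What your route buys is self-containedness and transparency: no reproducing property or polynomial calculus is needed, only the multinomial theorem, finite-dimensional Cauchy--Schwarz, and the evaluation of $K$ by recognizing $\sum_{|\beta|=p/2}\binom{p/2}{\beta}\prod_i(2\beta_i-1)!!=\mathbb{E}\norm{g}^p$ for $g\sim N(0,I_d)$, a $\chi_d^2$ moment; moreover the equality case reduces cleanly to linear independence of the monomials $y^\alpha$, which converts your two equality conditions into $\int_{\Rd}\ip{x}{y}^pd\mu(x)=\lambda\norm{y}^p$. What the paper's route buys is contact with the apolarity/spherical-design machinery and an intrinsic identification of the tight-frame constant. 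Two details worth pinning down in your write-up: (i) in the equality case, $\lambda>0$ follows from the standing hypothesis that $\mu$ is a probabilistic $p$-frame (so $\int_{\Rd}\norm{x}^pd\mu(x)>0$), ruling out the degenerate $\mu=\delta_0$; (ii) adopt the convention $(-1)!!=1$ so the Gaussian step handles indices with $\beta_i=0$.
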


\noindent We note that this result was also proven previously by Sidel'nikov, as cited in \cite[Theorem 1]{KotelinaPevnyi}. In fact, for even $p=2k,$ the finitely-supported minimizers of the potential include those for which, with their support denoted $X,$ $X\bigcup -X$ is a spherical $2k$-design (\cite{XuXu,GoethalsSeidel,Sidelnikov1974}). 

In the discrete setting and when $p>2,$ the above bound may not be tight, especially when the cardinality of the support of the minimizer and the dimension of the space are incompatible with the existence of an ETF, as proved in \cite{MartinKassoPframe}. The minimum is attained, for example, when the cardinality of the support is $N=d+1,$ and in this case the minimum above is achieved by a measure supported on the vertices of the regular simplex. However, for a given dimension, there are restrictions on the possible cardinalities for equiangular line sets and ETFs which are still being worked out (see, e.g., \cite{CasazzaRedmondTremain}). Moreover, as noted in \cite{XumeiKassoEtAl}, it is nontrivial, even for small dimension, to find spherical $k$-designs for large $k.$ For these reasons, finding minimizers of the $p$-frame potential for higher values of $p$ and understanding their geometry is of great interest \textemdash hence our desire to develop an gradient-flow-based, numerically-implementable approach. In addition, we note that Theorem~\ref{p_frame_bound} also addressed the case where the probability measures are not supported on the unit sphere.

We assert that Wasserstein gradient flows in $P_p(\Rd)$ for a normalized version of this potential could be constructed to obtain tight $p$-frames for $p$ even (as we did above for $p=2$). Indeed, we have computed a gradient for such a potential and simulations show convergence to, e.g., equiangular tight frames for $p=4,6,8.$ As noted above, the complexity in this regime lies in the fact that there is a complex interplay between dimension and cardinality for any finitely-supported measure. Further, the lower bound for the $p$-probabilistic frame potential for discrete frames (as, indeed, any finite approximation must be), is not the same as that given in Theorem \ref{p_frame_bound}. Therefore, the form of the objective function must not use any assumption about the minimum value. We look forward to sharing these results in a subsequent publication.

Finally, we end with two points on the form of the gradient for quantities related to such a potential. First, we can state the gradient for the $pth$ moment (even $p$) in $P_p(\Rd)$.
\begin{claim}
The $p$-th moment is a differentiable function on $P_p(\Rd)$ for even $p = 2k > 2$ with an element of the subdifferential being $\gamma=(i,h_p^{\mu})_{\hash}\mu\in P_{p,q}(\RdxRd), q=\frac{p-1}{p},$ where $h_p^{\mu}(x):=2\norm{x}^{p-2}x$.
\end{claim}


And using the extension of the Otto calculus from $P_2(\Rd)$ to $P_p(\Rd)$, we can show the following, the proof of which follows from Definition \ref{strong_p_subdifferential}:

\begin{proposition}
The $p$-frame potential is a differentiable function on $P_p(\Rd)$ for even $p > 2$ with an element of the subdifferential being $\gamma=(i,g_p^{\mu})_{\hash}\mu\in P_{p,q}(\RdxRd), q=\frac{p-1}{p},$ where $$g_p^{\mu}(x):=2p\int\ip{x}{u}^{p-1}ud\mu(u).$$
\end{proposition}

These two subdifferentials are sufficient to design a gradient descent scheme for frame potentials of even order.


\section{Acknowledgments}

For this research, C.~Wickman did not receive any specific grant from funding agencies in the public, commercial, or not-for-profit sectors. K.~A.~Okoudjou  was partially supported by a grant from the Simons Foundation $\# 319197$, by ARO grant W911NF1610008, and the National Science Foundation grant DMS 1814253.

\bibliographystyle{acm}
\bibliography{Bibliography} 

\begin{thebibliography}{10}

\bibitem{AGS2005}
{\sc Ambrosio, L., Gigli, N., and Savare, G.}
\newblock {\em Gradient Flows in Metric Spaces and in the Space of Probability
  Measures}.
\newblock Birkh{\"a}user, Boston, 2005.

\bibitem{appl2005}
{\sc Appleby, D.~M.}
\newblock Symmetric informationally complete-positive operator valued measures
  and the extended {C}lifford group.
\newblock {\em Journal of Functional Analysis 46}, 5 (2005).

\bibitem{balague2013dimensionality}
{\sc Balagu{\'e}, D., Carrillo, J.~A., Laurent, T., and Raoul, G.}
\newblock Dimensionality of local minimizers of the interaction energy.
\newblock {\em Archive for Rational Mechanics and Analysis 209}, 3 (2013),
  1055--1088.

\bibitem{BenedettoFickus}
{\sc Benedetto, J.~J., and Fickus, M.}
\newblock Finite normalized tight frames.
\newblock {\em Advances in Computational Mathematics 18}, 2-4 (2003), 357--385.

\bibitem{BenedettoKebo}
{\sc Benedetto, J.~J., and Kebo, A.}
\newblock The role of frame force in quantum detection.
\newblock {\em Journal of Fourier Analysis and Applications 14}, 3 (2008),
  443--474.

\bibitem{BilykDaiMatzke}
{\sc Bilyk, D., Dai, F., and Matzke, R.}
\newblock The {S}tolarsky principle and energy optimization on the sphere.
\newblock {\em Constr. Approx. 48}, 1 (2018), 31--60.

\bibitem{BilykPark}
{\sc Bilyk, D., Glazyrin, A., Matzke, R., Park, J., and Vlasiuk, O.}
\newblock Energy on spheres and discreteness of minimizing measures.
\newblock {\em Journal of Functional Analysis 280\/} (2021), 108995.

\bibitem{BodmannCasazza2010}
{\sc Bodmann, B.~G., and Casazza, P.~G.}
\newblock The road to equal-norm {P}arseval frames.
\newblock {\em Journal of Functional Analysis 258\/} (2010), 397--420.

\bibitem{Bonnet}
{\sc Bonnet, B., and Rossi, F.}
\newblock The pontryagin maximum principle in the wasserstein space.
\newblock {\em Calculus of Variations and Partial Differential Equations 58\/}
  (2019).

\bibitem{BoufounosBaraniuk1bitCS}
{\sc Boufounos, P., and Baraniuk, R.}
\newblock 1-bit compressive sensing.
\newblock In {\em 42nd Annual Conference on Information Sciences and Systems\/}
  (2008), IEEE, pp.~16--21.

\bibitem{NateMattJameson}
{\sc Cahill, J., Fickus, M., Mixon, D., Poteet, M., and Strawn, N.}
\newblock Constructing finite frames of a given spectrum and set of lengths.
\newblock {\em Applied and Computational Harmonic Analysis\/} (2012).

\bibitem{CarrilloFigalli}
{\sc Carrillo, J., Figalli, A., and Pattacchini, F.}
\newblock Geometry of minimizers for the interaction energy with mildly
  repulsive potentials.
\newblock {\em Annales de L'Institut Henri Poincare (C) Nonlinear Analysis 34},
  5 (2017), 1299--1308.

\bibitem{carrilloslepcevwu}
{\sc Carrillo, J., Slepcev, D., and Wu, L.}
\newblock Nonlocal-interaction equations on uniformly prox-regular sets.
\newblock {\em American Institute of Mathematical Sciences 36}, 3 (2016),
  1209--1247.

\bibitem{CDFLS}
{\sc Carrillo, J.~A., DiFrancesco, M., Figalli, A., Laurent, T., and Slepcev,
  D.}
\newblock Global-in-time weak measure solutions and finite time aggregation for
  nonlinear interaction equations.
\newblock {\em Duke Mathematics Journal 156\/} (2011), 229--271.

\bibitem{CLM2010}
{\sc Carrillo, J.~A., Lisini, S., and Mainini, E.}
\newblock Gradient flows for nonsmooth interaction potentials.
\newblock {\em Nonlinear Analysis: Theory, Method, and Applications 100\/}
  (2014), 122--147.

\bibitem{CasazzaRedmondTremain}
{\sc Casazza, P., Redmond, D., and Tremain, J.}
\newblock Real equiangular frames.
\newblock In {\em 42nd Annual Conference on Information Sciences and Systems\/}
  (2008), IEEE, pp.~715--720.

\bibitem{AutoTuning}
{\sc Casazza, P.~G., Fickus, M., and Mixon, D.}
\newblock Autotuning unit-norm frames.
\newblock {\em Applied Computational Harmonic Analysis 32}, 1 (2012), 1--15.

\bibitem{SpectralTetris}
{\sc Casazza, P.~G., Fickus, M., Mixon, D., Wang, Y., and Zhou, Z.}
\newblock Spectral tetris fusion frame constructions.
\newblock {\em Journal of Fourier Analysis and Applications 18}, 4 (2012),
  828--851.

\bibitem{CasKut2013}
{\sc Casazza, P.~G., and Kutyniok, G.}, Eds.
\newblock {\em Finite {F}rames: {T}heory and {A}pplications}.
\newblock Springer-Birkh{\"a}user, New York, 2013.

\bibitem{XumeiKassoEtAl}
{\sc Chen, Z., Gonzalez, V., Goodman, E., Kang, S., and Okoudjou, K.}
\newblock Universal optimal configurations for the p-frame potentials.
\newblock {\em Advances in Computational Mathematics 46}, 4 (2020).

\bibitem{OkoudjouCheng}
{\sc Cheng, D., and Okoudjou, K.}
\newblock Optimal properties of the canonical tight probabilistic frame.
\newblock {\em Numer. Funct. Anal. Optim. 40}, 2 (2019), 216--240.

\bibitem{Christensen2003}
{\sc Christensen, O.}
\newblock {\em An Introduction to Frames and Riesz Bases}.
\newblock Birkh{\"a}user, Boston, 2003.

\bibitem{Craig2017}
{\sc Craig, K.}
\newblock Nonconvex gradient flow in the wasserstein metric and applications to
  constrained nonlocal interactions.
\newblock {\em Proceedings of the London Mathematical Society 114}, 1 (2017),
  60--102.

\bibitem{MartinRTF}
{\sc Ehler, M.}
\newblock Random tight frames.
\newblock {\em Journal of Fourier Analysis and Applications 18}, 1 (2012),
  1--20.

\bibitem{EhlerGalanis}
{\sc Ehler, M., and Galanis, J.}
\newblock Frame theory in directional statistics.
\newblock {\em Statistics and Probability Letters 81\/} (2011), 1046--1051.

\bibitem{KassoMartinOverview}
{\sc Ehler, M., and Okoudjou, K.~A.}
\newblock {\em Finite Frames, Ed. G. Kutyniok and P. G. Casazza}.
\newblock Springer, New York, 2011, ch.~Probabilistic frames: An overview.

\bibitem{MartinKassoPframe}
{\sc Ehler, M., and Okoudjou, K.~A.}
\newblock Minimization of the probabilistic p-frame potential.
\newblock {\em Journal of Statistical Planning and Inference 142}, 3 (2012),
  645--659.

\bibitem{EldenParkProcrustes}
{\sc Eld{\'e}n, L., and Park, H.}
\newblock A procrustes problem on the stiefel manifold.
\newblock {\em Numerische Mathematik 82\/} (1999), 599--619.

\bibitem{GKP}
{\sc Gangbo, W., Kim, H.~K., and Pacini, T.}
\newblock Differential forms on {W}asserstein space and infinite-dimensional
  {H}amiltonian systems.
\newblock {\em Memoirs of the AMS 211}, 993 (2011).

\bibitem{GoethalsSeidel}
{\sc Goethals, J., and Seidel, J.}
\newblock {\em The Geometric Vein}.
\newblock Springer, New York, 1981, ch.~Cubature Formulae, Polytopes, and
  Spherical Designs, pp.~203--218.

\bibitem{GoyalKovacevic}
{\sc Goyal, V., and Kovacevic, J.}
\newblock Quantized frame expansions with erasures.
\newblock {\em Applied and Computational Harmonic Analysis 10\/} (2001),
  203--233.

\bibitem{JKO}
{\sc Jordan, R., Kinderlehrer, D., and Otto, F.}
\newblock The variational formulation of the {F}okker-{P}lanck equation.
\newblock {\em SIAM Journal of Mathematical Analysis 29\/} (1998), 1--17.

\bibitem{Kamalinejad}
{\sc Kamalinejad, E.}
\newblock Well-posedness of {W}asserstein gradient flow solutions of higher
  order evolution equations.
\newblock {\em (preprint)\/} (2012).

\bibitem{KokiopoulouChenSaad}
{\sc Kokiopoulou, E., Chen, J., and Saad, Y.}
\newblock Trace optimization and eigenproblems in dimension reduction methods.
\newblock {\em Numerical Linear Algebra with Applications 18\/} (2010),
  565--602.

\bibitem{KotelinaPevnyi}
{\sc Kotelina, N., and Pevnyi, A.}
\newblock Complex spherical semi-designs.
\newblock {\em Russian Mathematics 61}, 5 (2017), 46--51.

\bibitem{LiuDaiLuo}
{\sc Liu, Y.-F., Dai, Y.-H., and Luo, Z.-Q.}
\newblock On the complexity of leakage interference minimization for
  interference alignment.
\newblock In {\em 2011 IEEE 12th International Workshop on Signal Processing
  Advances in Wireless Communications\/} (2011), pp.~471--475.

\bibitem{LuZhangPCA}
{\sc Lu, Z., and Zhang, Y.}
\newblock An augmented lagrangian approach for sparse principal component
  analysis.
\newblock {\em Mathematical Programming 135\/} (2012), 149--193.

\bibitem{OkoudjouWickman}
{\sc Okoudjou, K., and Wickman, C.}
\newblock Duality and geodesics for probabilistic frames.
\newblock {\em Linear Algebra and Its Applications 532\/} (2017), 198--221.

\bibitem{OkoudjouFiniteFrame}
{\sc Okoudjou, K.~A.}, Ed.
\newblock {\em Finite {F}rame {T}heory: {A} {C}omplete {I}ntroduction to
  {O}vercompleteness}, vol.~73 of {\em Proceedings of {S}ymposia in {A}pplied
  {M}athematics}.
\newblock AMS, Providence, RI, 2016.

\bibitem{OktayThesis}
{\sc Oktay, O.}
\newblock {\em Frame quantization theory and equiangular tight frames}.
\newblock PhD thesis, University of Maryland, 2007.

\bibitem{OsherLai}
{\sc Osher, S., and Lai, R.}
\newblock A splitting method for orthogonality constrained problems.
\newblock {\em Journal of Scientific Computing 58\/} (2014), 431--449.

\bibitem{OviedoProcrustes}
{\sc Oviedo, H.}
\newblock A spectral gradient projection method for the positive semi-definite
  procrustes problem.
\newblock {\em (preprint)\/} (2019).

\bibitem{Oviedo}
{\sc Oviedo, H., and Dalmau, O.}
\newblock A scaled gradient projection method for minimization over the stiefel
  manifold.
\newblock In {\em Advances in Soft Computing\/} (Cham, 2019),
  L.~Mart{\'i}nez-Villase{\~{n}}or, I.~Batyrshin, and
  A.~Mar{\'i}n-Hern{\'a}ndez, Eds., Springer International Publishing,
  pp.~239--250.

\bibitem{RenBluScoCav2004}
{\sc Renes, J.~M., Blume-Kohout, R., Scott, A.~J., and Caves, C.}
\newblock Symmetric informationally complete quantum measurements.
\newblock {\em J. Math. Phys. 45}, 6 (2004), 2171--2180.

\bibitem{Sidelnikov1974}
{\sc Sidel'nikov, V.}
\newblock New estimates for the closest packing of spheres in n-dimensional
  euclidean space.
\newblock {\em Mat. Sb. (NS) 95}, 137 (1974), 148--158.

\bibitem{Venkov}
{\sc Venkov, B.}
\newblock {\em R{\'e}seaux euclideans, designs sph{\'e}riques et formes
  modulaires}.
\newblock No.~37 in Monogr. Enseign. Math. Enseignement Math, Geneva, 2001,
  ch.~R{\'e}seaux et designs sph{\'e}riques, pp.~10--86.

\bibitem{Villani2009}
{\sc Villani, C.}
\newblock {\em Optimal Transport, Old and New}.
\newblock No.~338 in Grundlehren der mathematischen Wissenschaften.
  Springer-Verlag, New York, 2009.

\bibitem{XuXu}
{\sc Xu, Z., and Xu, Z.}
\newblock The minimizers of the p-frame potential.
\newblock {\em Applied and Computational Harmonic Analysis 52}, 2021 (2020),
  366--379.

\bibitem{ZippMatlin}
{\sc Zipp, L., and Matlin, E.}
\newblock Adaptive imaging of arbitrary thermal source distributions with near
  quantum-limited resolution.
\newblock {\em (preprint)\/} (2021).

\end{thebibliography}

\end{document}